\numberwithin{equation}{section}
\newcommand{\vb}{{\mathbf{b}}}
\newcommand{\vp}{{\mathbf{p}}}
\newcommand{\vu}{{\mathbf{u}}}
\newcommand{\vv}{{\mathbf{v}}}
\newcommand{\vw}{{\mathbf{w}}}
\newcommand{\vx}{{\mathbf{x}}}
\newcommand{\vy}{{\mathbf{y}}}
\newcommand{\vA}{{\mathbf{A}}}
\newcommand{\vI}{{\mathbf{I}}}
\newcommand{\vJ}{{\mathbf{J}}}
\newcommand{\vP}{{\mathbf{P}}}
\newcommand{\vR}{{\mathbf{R}}}
\newcommand{\cC}{{\mathcal{C}}}
\newcommand{\cL}{{\mathcal{L}}}
\newcommand{\cP}{{\mathcal{P}}}
\newcommand{\cV}{{\mathcal{V}}}
\newcommand{\sign}{\mathrm{sign}}
\newcommand{\vzero}{\mathbf{0}}
\DeclareMathOperator*{\argmin}{arg\,min}
\newcommand{\bc}{\begin{center}}
\newcommand{\ec}{\end{center}}
\newcommand{\bdm}{\begin{displaymath}}
\newcommand{\edm}{\end{displaymath}}
\newcommand{\beq}{\begin{equation}}
\newcommand{\eeq}{\end{equation}}
\newcommand{\bfl}{\begin{flushleft}}
\newcommand{\efl}{\end{flushleft}}
\newcommand{\bt}{\begin{tabbing}}
\newcommand{\et}{\end{tabbing}}
\newcommand{\beqn}{\begin{eqnarray}}
\newcommand{\eeqn}{\end{eqnarray}}
\newcommand{\beqs}{\begin{align*}} % no equation numbers
\newcommand{\eeqs}{\end{align*}}  % no equation numbers
\newcommand{\prox}{{\bf prox}}
\DeclareMathOperator*{\Min}{minimize}
\begin{document}
\title{Nonconvex Sorted $\ell_1$ Minimization for Sparse Approximation\thanks{This work is partially supported by ERC AdG A-DATADRIVE-B, IUAP-DYSCO, GOA-MANET and OPTEC, National Natural Science Foundation of China (11201079), the Fundamental Research Funds for the Central Universities of China (20520133238, 20520131169), and NSF grants DMS-0748839 and DMS-1317602.}}
%\subtitle{Do you have a subtitle?\\ If so, write it here}

\titlerunning{Nonconvex Sorted $\ell_1$ Minimization}        % if too long for running head

\author{Xiaolin Huang \and  Lei Shi \and Ming Yan}

%\authorrunning{Short form of author list} % if too long for running head

\institute{
X. Huang \at Department of Electrical Engineering, KU Leuven, B-3001 Leuven, Belgium. \\\email{huangxl06@mails.tsinghua.edu.cn}\and
L. Shi \at  Shanghai Key Laboratory for Contemporary Applied Mathematics, School of Mathematical Sciences, Fudan University, Shanghai, 200433, P.R. China. \\\email{leishi@fudan.edu.cn} \and
M. Yan \at Department of Mathematics, University of California, Los Angeles, CA 90095, USA. \\
              \email{yanm@math.ucla.edu}
							}

\date{Received: date / Accepted: date}
% The correct dates will be entered by the editor

\maketitle

\begin{abstract}
The $\ell_1$ norm is the tight convex relaxation for the $\ell_0$ ``norm" and has been successfully applied for recovering sparse signals. For problems with fewer samplings, one needs to enhance the sparsity by nonconvex penalties such as $\ell_p$ ``norm". As one method for solving $\ell_p$ minimization problems, iteratively reweighted $\ell_1$ minimization updates the weight for each component based on the value of the same component at the previous iteration. It assigns large weights on small components in magnitude and small weights on large components in magnitude. In this paper, we consider a weighted $\ell_1$ penalty with the set of the weights fixed and the weights are assigned based on the sort of all the components in magnitude. The smallest weight is assigned to the largest component in magnitude. This new penalty is called nonconvex sorted $\ell_1$. Then we propose two methods for solving nonconvex sorted $\ell_1$ minimization problems: iteratively reweighted $\ell_1$ minimization and iterative sorted thresholding, and prove that both methods will converge to a local optimum. We also show that both methods are generalizations of iterative support detection and iterative hard thresholding respectively. The numerical experiments demonstrate the better performance of assigning weights by sort compared to $\ell_p$ minimization.

\bigskip

\keywords{Iteratively Reweighted $\ell_1$ Minimization, Iterative Sorted Thresholding, Local Minimizer, Nonconvex Optimization, Sparse Approximation.}
\subclass{49M37 \and 65K10  \and 90C26  \and 90C52}
\end{abstract}

%%% Introduction %%%
\section{Introduction}\label{sec:intro}

Sparse approximation aims at recovering a sparse vector $\vu\in\vR^n$ from relatively few linear measurements $\vb\in\vR^m$. More precisely, we determine a sparse vector $\vu\in\vR^n$ from an underdetermined linear system
\begin{align*}
\vA\vu=\vb,
\end{align*}
where $\vA\in \vR^{m\times n}$ is the measurement matrix with $m<n$. Many problems such as signal and image compression, compressed sensing, and error correcting codes, can be modeled as sparse approximation problems~\cite{bruckstein2009sparse,starck2010sparse}. In statistics, sparse approximation is also related to selecting the relevant explanatory variables, which is referred to as model selection~\cite{buhlmann2011statistics}.

This underdetermined linear system has infinite solutions and we are interested in the sparest solution only. Finding the sparest vector amounts to solving the $\ell_0$ minimization problem
\begin{align}\label{sec:L0-minimization}
\min_{\vu\in \vR^n} \|\vu\|_0, \mbox{ subject to } \vA\vu=\vb,
\end{align}
where $\|\vu\|_0$ is the number of nonzero components in $\vu$.

Since the locations of the nonzero components are not available, solving problem~\eqref{sec:L0-minimization} directly is NP-hard in general~\cite{natarajan1995sparse}. A family of iterative greedy algorithms, including orthogonal matching pursuit~\cite{tropp2007signal}, CoSaMP~\cite{needell2009cosamp}, subspace pursuit~\cite{dai2009subspace}, iterative hard thresholding~\cite{blumensath2009iterative}, and hard thresholding pursuit~\cite{foucart2011hard}, have been established to solve problem~\eqref{sec:L0-minimization} with less computational complexity.

Another alternative is to consider the $\ell_1$ minimization problem
\begin{align}\label{sec:L1-minimization}
\min_{\vu\in \vR^n} \|\vu\|_1, \mbox{ subject to } \vA\vu=\vb.
\end{align}
%The global optimum of~\eqref{sec:L1-minimization} can be easily computed by existing techniques, e.g., the interior-point method and the simplex method.
The convex optimization problem~\eqref{sec:L1-minimization}, commonly known as basis pursuit~\cite{chen1998atomic}, is an efficient relaxation for problem~\eqref{sec:L0-minimization}. The $\ell_1$ minimization often leads to sparse solutions and the mechanism behind its performance has been theoretically analyzed. It is known from the compressed sensing literature that if $\vA$ obeys some conditions, such as the restricted isometry property~\cite{candes2006robust}, the null space property~\cite{cohen2009compressed}, and the incoherence condition~\cite{tropp2004greed}, problem~\eqref{sec:L0-minimization} and its convex relaxation~\eqref{sec:L1-minimization} are equivalent. When there is noise in the measurement, the following basis pursuit denoising model is proposed in~\cite{tibshirani1996regression}:
\begin{align}\label{sec:BPDN}
\min_{\vu\in \vR^n} \|\vu\|_1+\alpha\|\vA\vu-\vb\|^2.
\end{align}

Though $\ell_1$ minimization is stable and has a number of theoretical results, it is not able to recover the sparest solutions in many real applications, e.g., computed tomography, where the sufficient conditions for exact recovery are not satisfied. The noncovex optimization is applied to enhance the sparsity of the solutions and improve the recovery performance. The mostly used non-convex penalty is~$\ell_p$ ``norm" with $0<p<1$~\cite{chartrand2007exact,foucart2009sparsest,saab2010sparse,xu2012regularization}, which connects $\ell_0$ and $\ell_1$. Other nonconvex penalties in the literature are: smoothly clipped absolute deviation (SCAD)~\cite{fan2001variable}, generalized shrinkage~\cite{chartrand2013generalized,chartrand2014shrinkage}, etc. For almost all the nonconvex penalties used, the regularization terms penalize components with small magnitudes more than those with large magnitudes and are separable.

In this paper, we introduce a new nonconvex penalty called nonconvex sorted $\ell_1$ penalty which penalizes different components in the solution according to their ranks in the absolute value. It is not separable and the set of weights is fixed. The contributions of this paper are summarized below.
\begin{itemize}
\item We introduce the nonconvex sorted $\ell_1$ minimization to enhance the sparsity and improve recovery performance.
\item We build the connection of the nonconvex sorted $\ell_1$ to several existing penalties including $\ell_1$ and $K$-sparsity.
\item We propose an iteratively reweighted $\ell_1$ minimization for solving the problems with nonconvex sorted $\ell_1$ terms, and show that it converges in finite steps to a local minimizer.
\item We propose an iterative sorted thresholding method for the unconstrained denoising problem and show that it converges to a local optimum. This iterative sorted thresholding is a generalization of iterative soft thresholding and iterative hard thresholding.
\end{itemize}

\subsection{Notation}
The following notations are used throughout the paper. For any dimension $n$, bold lowercase letters are used for vectors and lowercase letters with subscripts denote their components, e.g., $\vu=(u_1,\cdots,u_n)^T\in\vR^n$. Bold uppercase letters such as $\vA$ and $\vP$ are used for matrices. $\vI_n$ stands for the $n\times n$ identity matrix, and $\vJ_n$ is the all-ones $n\times n$ matrix. For $\vu\in\vR^n$, the $\ell_p$ norm of $\vu$ is $\|\vu\|_p:=(\sum_{i=1}^n|u_i|^p)^{1/p}$ for $0<p<\infty$, with the usual extension $\|\vu\|_0:=\#\{i:u_i\neq 0\}$ and $\|\vu\|_{\infty}:=\max_{1\leq i\leq n}|u_i|$.  Strictly speaking, $\|\cdot\|_0$ is not a real norm and $\|\cdot\|_p$ merely defines a quasi-norm when $0<p<1$. For simplicity, let $\|\cdot\|$ stands for $\|\cdot\|_2$. The indicator function $\iota_{\cC}$ of the set $\cC$ is defined as follows:
\begin{align*}
\iota_\cC(\vx)=\left\{\begin{array}{ll}0,&\mbox{ if }\vx\in\cC,\\+\infty,&\mbox{ if }\vx\notin\cC.\end{array}\right.
\end{align*}
The component-wise multiplication of two vectors $\vx$ and $\vy$ is defined as follows:
\begin{align*}
(\vx\odot\vy)_i=x_i\times y_i,\quad\forall i\in[1,n].
\end{align*}

\subsection{Organization}
The rest of the paper is presented as follows. The previous related works on nonconvex optimization for sparse approximation are given in section~\ref{sec:previous}. We introduce the nonconvex sorted $\ell_1$ minimization in section~\ref{sec:nonconvex_minimization}, and propose two methods for solving the problem with nonconvex sorted $\ell_1$ terms in sections~\ref{sec:alg1} and~\ref{sec:alg2}. The convergence results are shown in the corresponding sections. In section~\ref{sec:numerical}, we compare the performance of this nonconvex sorted $\ell_1$ with other nonconvex approaches on compressed sensing problems for both noise-free and noisy cases. This paper is ended with a short conclusion section.

%%% Previous Works %%%
\section{Previous Works}\label{sec:previous}

As shown in applications of sparse approximation, nonconvex minimizations have better performance than convex optimizations in enforcing sparsity. The $\ell_p$ minimization with $0<p<1$ is mostly used in literature as a bridge between $\ell_0$ and $\ell_1$. It was demonstrated that under certain restricted isometry properties, the $\ell_p$ minimization problem
\begin{align}\label{sec:Lp-minimization}
\min_{\vu\in \vR^n} \|\vu\|^p_p, \mbox{ subject to } \vA\vu=\vb
\end{align} recovers sparse vectors from fewer linear measurements than the $\ell_1$ minimization (\ref{sec:L1-minimization}) does~\cite{chartrand2007exact,foucart2009sparsest}. Because of the nonconvexity, finding a global minimizer of problem~\eqref{sec:Lp-minimization} or its unconstrained variant is generally NP-hard~\cite{ge2011note,Chen2014complexity}.

The first group of methods for solving problem~\eqref{sec:Lp-minimization} is to solve a sequence of convex problems. The iteratively reweighted $\ell_1$ minimization (IRL1)~\cite{candes2008enhancing} is to iteratively solve the following weighted $\ell_1$ minimization
\begin{align}\label{sec:IRL1}
\vu^{l+1}=\arg\min_{\vu\in \vR^n} \left\{\sum_{i=1}^n w^{l}_i |u_i|, \mbox{ subject to } \vA\vu=\vb\right\},
\end{align} where $w^{l}_i=(|u^{l}_i|+\epsilon)^{p-1}$ and $\epsilon>0$. A parallel approach to IRL1 is iteratively reweighted least squares (IRLS)~\cite{chartrand2008iteratively,daubechies2010iteratively}, which iteratively solves the following weighted least square problem,
\begin{align}\label{sec:IRLS}
\vu^{l+1}=\arg\min_{\vu\in \vR^n} \left\{\sum_{i=1}^n w^{l}_i u^2_i, \mbox{ subject to } \vA\vu=\vb\right\},
\end{align} with $w^{l}_i=\left((u^{l}_i)^2+\epsilon\right)^{p/2-1}$ for $i=1,\cdots,n$.
When $\epsilon\to 0$, the objective functions in (\ref{sec:IRL1}) and (\ref{sec:IRLS}) are approximations to $\|\vu\|^p_p$. The $\epsilon$-regularization strategy, which starts with a relatively large $\epsilon$ and decreases $\epsilon$ at each iteration, improves the performance of sparse recovery for both methods. Similar iteratively reweighted algorithms have been established in \cite{lai2013improved,chen2013convergence} for unconstrained smoothed $\ell_p$ minimization. %Moreover, it is showed that under certain conditions, any accumulation point in the solution sequence is a stationary point where the first-order optimality condition holds. However, as an alternative method to solve the $\ell_p$ minimization, whether the solution of iteratively reweighted algorithms can approximate a locally optimal of the original $\ell_p$ minimization is still unclear.

Additionally, iterative thresholding algorithms have been established for unconstrained $\ell_p$ minimization problems~\cite{blumensath2008iterative,xu2012regularization}. These algorithms can search a local minimizer starting from any initial point using the so-called shrinkage operator to generate a minimizing sequence such that the objective function is strictly decreasing along the sequence. The iterative thresholding algorithm is very efficient and particularly suitable for large-scale problems. But except for $p=0,1/2,2/3,1$, one can not give an explicit expression for the shrinkage operator which limits its applications in the $\ell_p$ minimization~\cite{xu2012regularization}.

Except the nonconvex $\ell_p$, other nonconvex penalties have been proposed. The SCAD penalty in statistics~\cite{fan2001variable}, of which the contour map is illustrated in Fig.~\ref{fig-example-concave-losses}, is defined as $R_{\textnormal{SCAD}}(\vu)=\sum_{i=1}^n r_{\textnormal{SCAD}}(u_i)$, where
\begin{align*}
r_{\textnormal{SCAD}}(u_i)=\left\{\begin{array}{ll}a_1|u_i| & \mbox{ if }|u_i|<a_1, \\ -{a_1|u_i|^2-2a_1a_2|u_i|+a_1^3\over 2(a_2-a_1)} & \mbox{ if }a_1\leq |u_i|\leq a_2,\\ {a_1a_2+a_1^2\over 2} & \mbox{ if }|u_i|>a_2.\end{array}\right.
\end{align*}
The $\ell_{1-2}$, i.e., the difference between $\ell_1$ and $\ell_2$ norms, minimization is proposed in~\cite{yin2014minimization}. The difference of convex functions programming can be applied to solve those nonconvex optimization problems~\cite{gasso2009recovering}.

All the penalties mentioned above are separable, i.e., the same penalty function is applied on all the components of the vector, except that $\ell_{1-2}$ has a $\ell_2$ term which couples all the components equally. In addition, there are a group of methods applying different penalty functions on different components based on their ranks in the absolute value. Iterative hard thresholding~\cite{blumensath2009iterative} puts weight $0$ on the first $K$ largest components and $+\infty$ on the other components; Iterative support detection~\cite{wang2010sparse} puts weight $0$ on the first $K$ largest components and $1$ on the other components; The two-level $\ell_1$ ``norm"~\cite{huang2014two} puts a smaller positive weight on the first $K$ largest components and a larger weight on the other components. In this paper, we generalize all these methods using more than two different weights, and propose iteratively reweighted $\ell_1$ minimization and iterative sorted thresholding to solve problems with this generalized penalty.

%\newpage

%%% Section 3 %%%
\section{Nonconvex Sorted $\ell_1$ Minimization}\label{sec:nonconvex_minimization}

This section introduces a new nonconvex minimization named \emph{nonconvex sorted $\ell_1$} minimization. Assume $\vu\in\vR^n$. Let $\lambda$ be a nondecreasing sequence of nonnegative numbers,
\begin{align*}
0\leq \lambda_1\leq \lambda_2\leq\cdots\leq\lambda_n,
\end{align*}
with $\lambda_n>0$. The nonconvex sorted $\ell_1$ is defined as
\begin{align}\label{eqn:nonconvexl1}
R_\lambda(\vu)=\lambda_1|\vu|_{[1]}+\lambda_2|\vu|_{[2]}+\cdots+\lambda_n|\vu|_{[n]},
\end{align}
where $|\vu|_{[1]}\geq|\vu|_{[2]}\geq\cdots\geq|\vu|_{[n]}$ are the absolute values ranked in decreasing order. It is different from the sorted $\ell_1$ norm proposed by Bogdan et.al. in~\cite{Bogdan2013statistical}, where $\lambda$ is a nonincreasing sequence of nonnegative numbers, i.e., higher weights are assigned on components with larger absolute values. The contour map of the non-convex sorted $\ell_1$ is illustrated in Fig.~\ref{fig-example-concave-losses}, along with those of $\ell_1$ norm, SCAD, and $\ell_p$ ``norm".

\begin{figure}[!htb]
  \centering
  \subfigure[]{
    \includegraphics[width=0.4\textwidth]{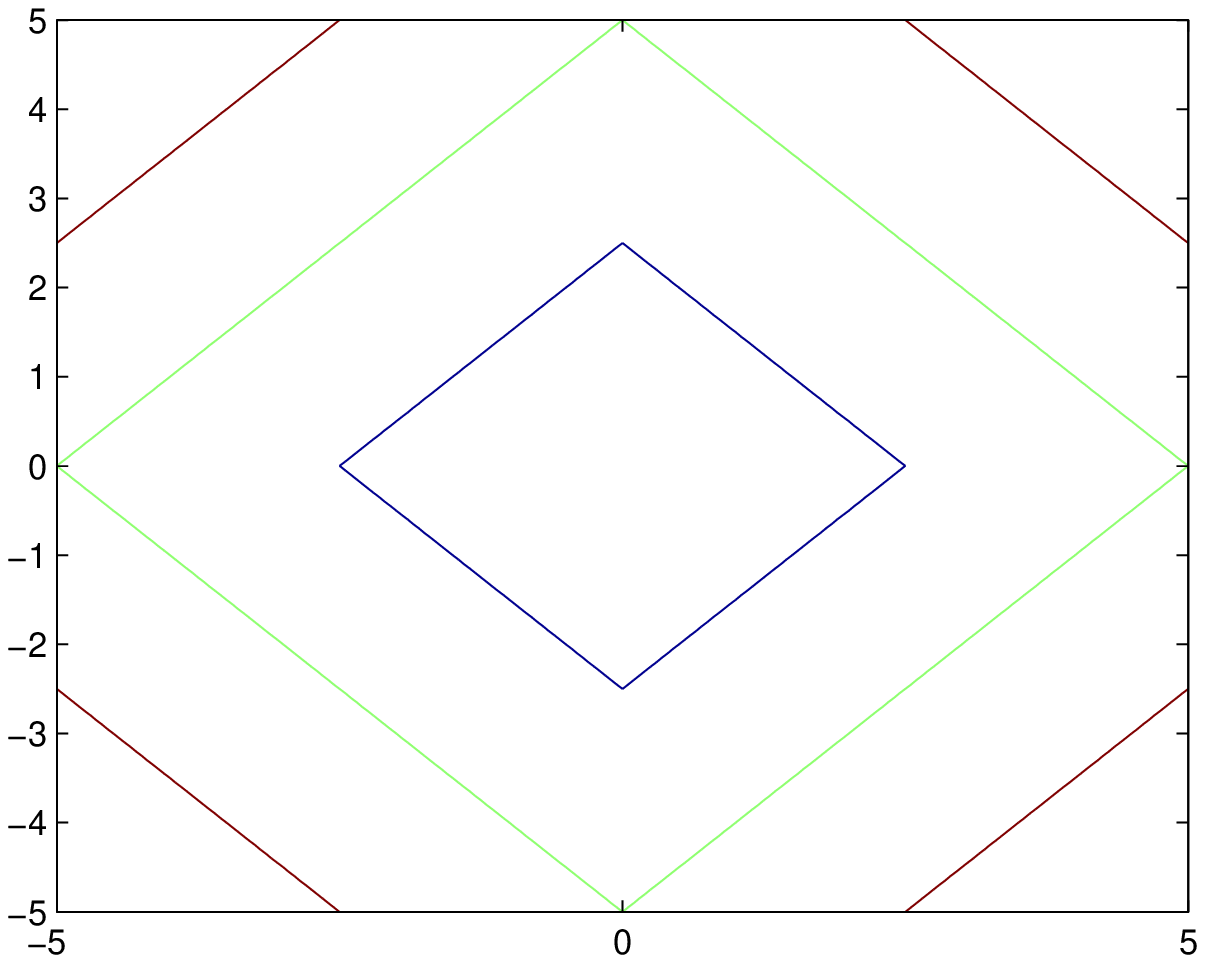}}
  \subfigure[]{
    \includegraphics[width=0.4\textwidth]{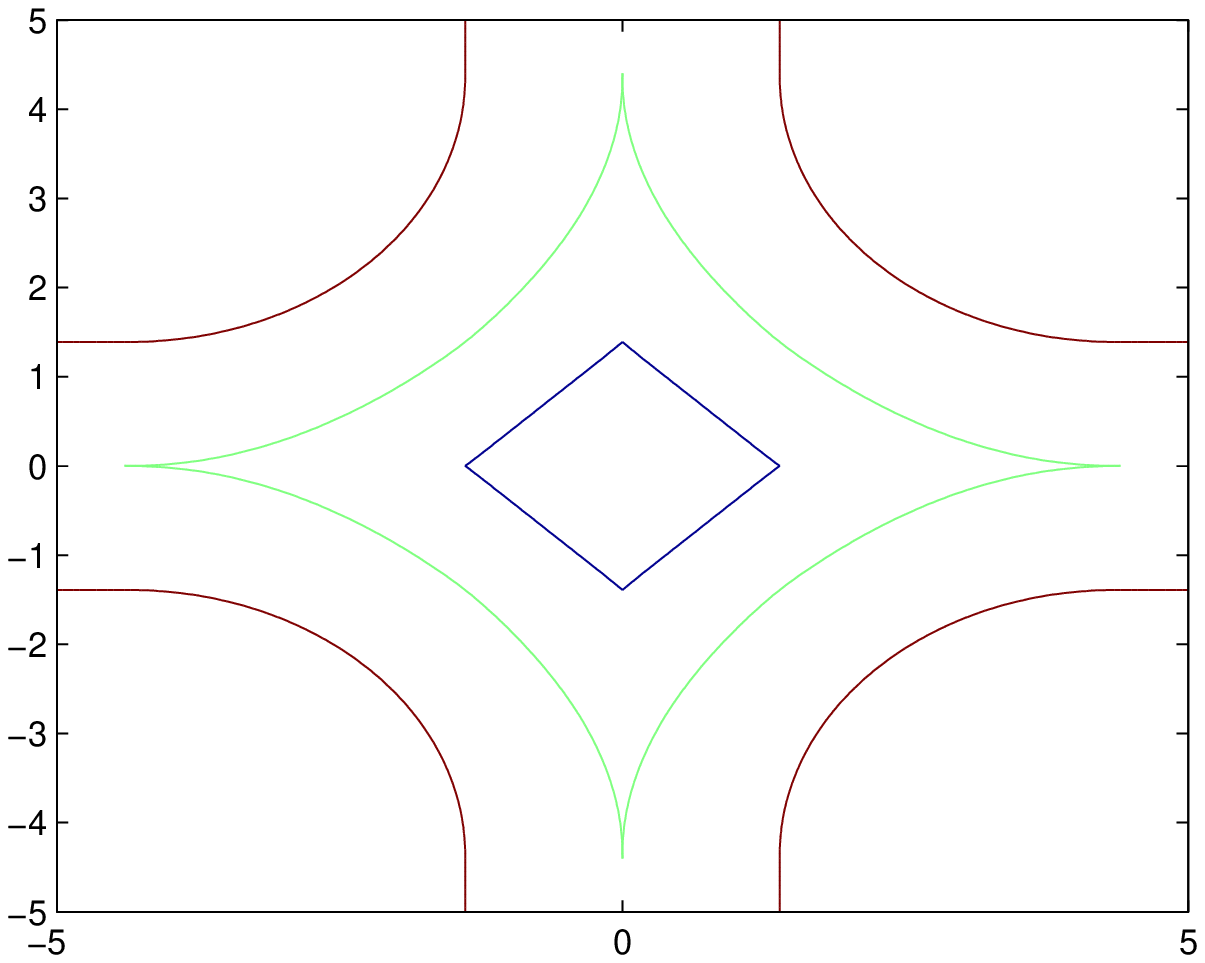}}\\
  \subfigure[]{
    \includegraphics[width=0.4\textwidth]{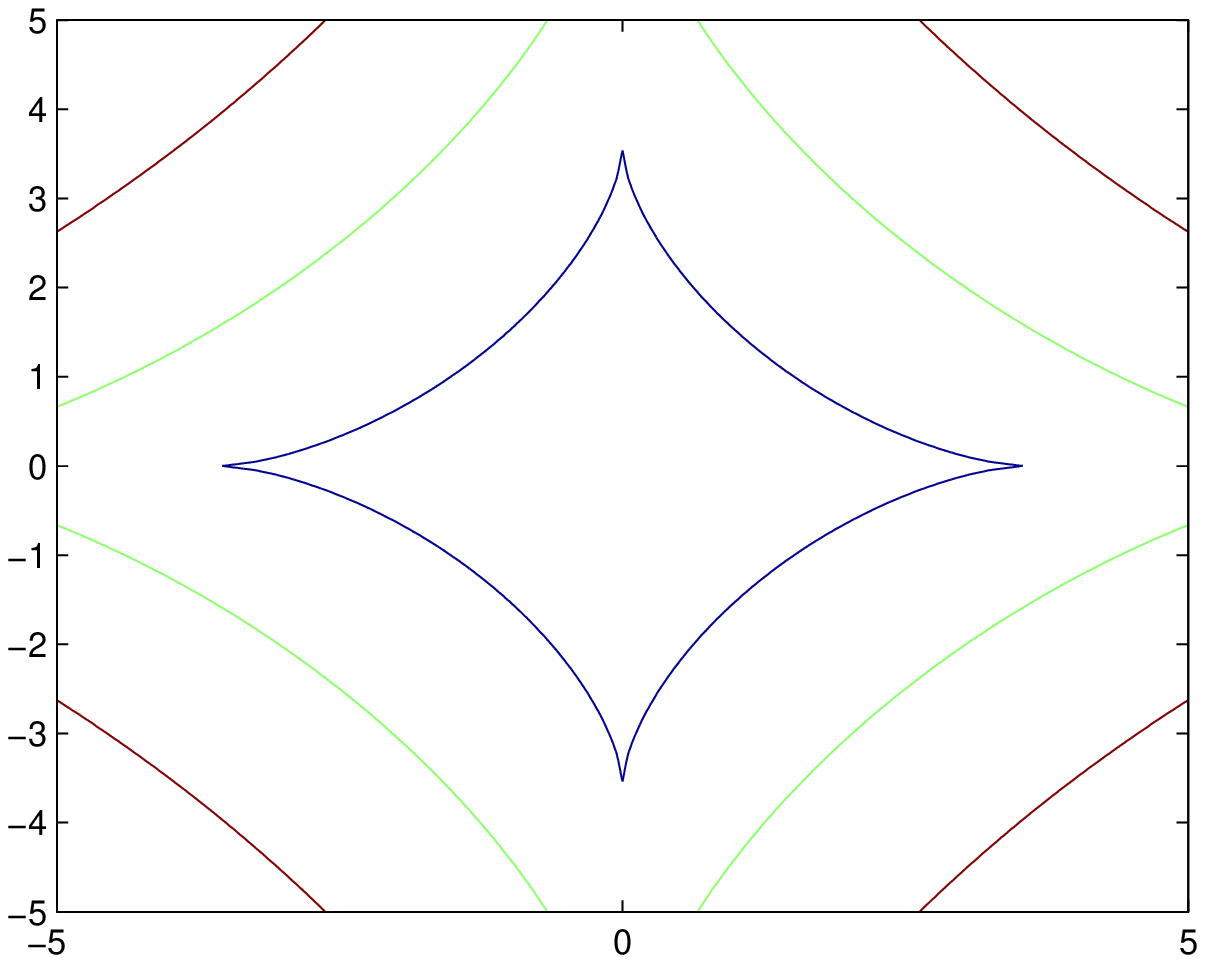}}
  \subfigure[]{
    \includegraphics[width=0.4\textwidth]{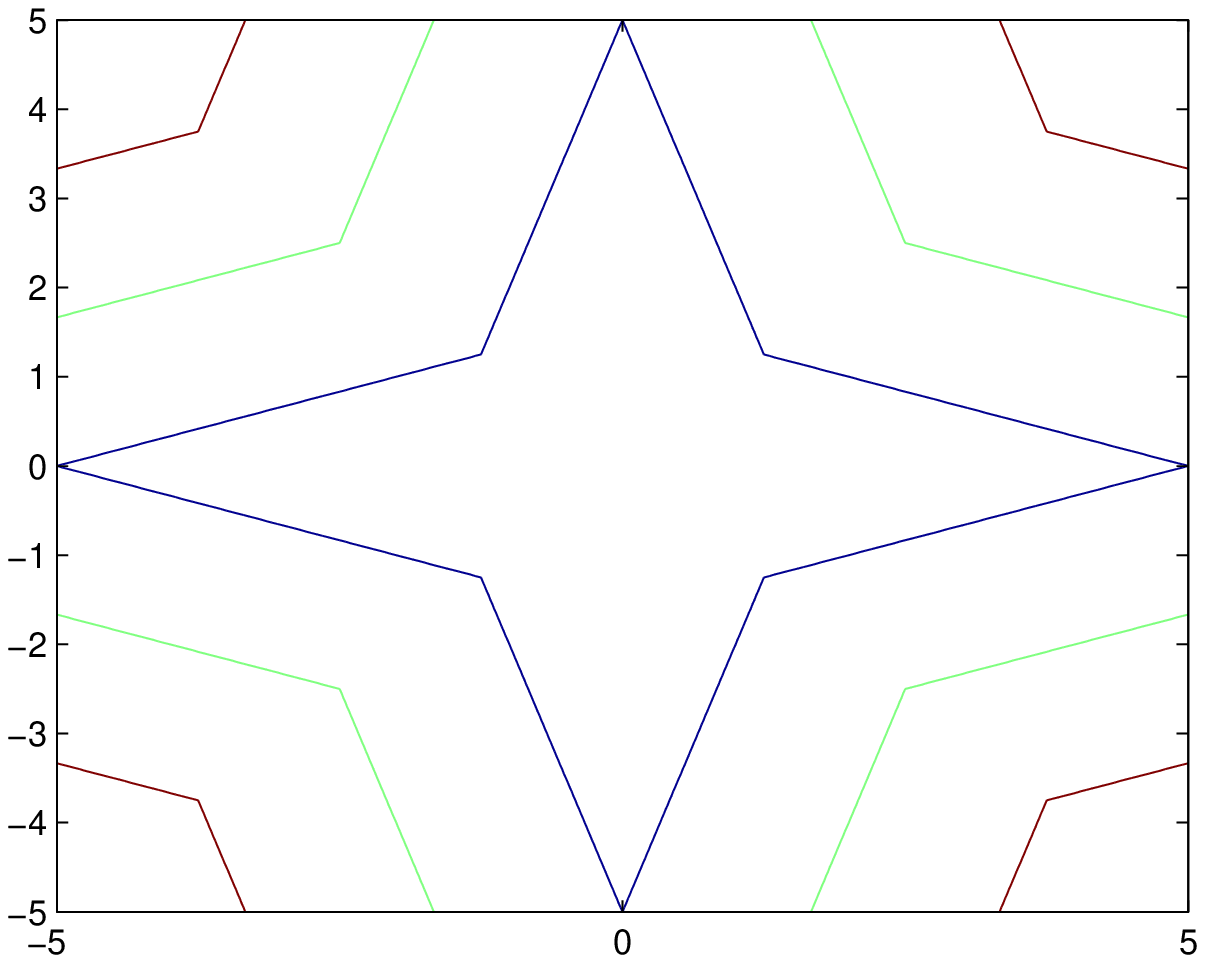}}\\
    \caption{The contour maps of several penalties: (a) the $\ell_1$ norm; (b) the SCAD penalty with $a_1 = 1.1, a_2 = 3.7$; (c) the $\ell_p$-norm with $p = 2/3$; (d) the nonconvex sorted $\ell_1$ with $\lambda_1=1/3$ and $\lambda_2=1$.}\label{fig-example-concave-losses}
\end{figure}

First, we establish the connections of this nonconvex sorted $\ell_1$ minimization to existing works.
\begin{itemize}
\item If $\lambda_1=\lambda_2=\cdots=\lambda_n=1$, it is the $\ell_1$ norm of $\vu$.
\item If $\lambda_1=\cdots=\lambda_K=0$ and $\lambda_{K+1}=\cdots=\lambda_n=+\infty$ for some $K$ satisfying $0<K\leq n$, it is the indicator function for $\{\vu:\|\vu\|_0\leq K\}$.
\item If $\lambda_1=\cdots=\lambda_K=0$ and $\lambda_{K+1}=\cdots=\lambda_n=1$ for some $K$ satisfying $0<K<n$, it corresponds to the iterative support detection in~\cite{wang2010sparse}. In~\cite{wang2010sparse}, $K$ can be changed adaptively during the iterations.
\item If $\lambda_1=\cdots=\lambda_K=w_1$ and $\lambda_{K+1}=\cdots=\lambda_n=w_2$ for $0<w_1<w_2<\infty$, it is the two-level $\ell_1$ ``norm" in~\cite{huang2014two}.
\item If $\lambda_i=w_1+w_2(i-1)$, where $w_1\geq0$ and $w_2>0$, it is the small magnitude penalized (SMAP) in~\cite{SMAP2014}.
\end{itemize}

In the following of this section, we introduce three equivalent formulations of the nonconvex sorted $\ell_1$, which will help us to show the convergence results of the methods for solving problems with nonconvex sorted $\ell_1$ terms in the following sections.

We introduce the following three functions with additional variables $\vP$, $\vv$, and $\Lambda$, respectively, and show that the nonconvex sorted $\ell_1$ is equivalent to the minimum of those functions by eliminating the new variables with the corresponding constraints.
\begin{align*}
F_1(\vu,\vP)&=\sum_{i=1}^n(\vP\lambda)_i|u_i|,\\
F_2(\vu,\vv)&=\lambda_1\|\vu\|_1+(\lambda_2-\lambda_1)\|\vu-\vv^1\|_1+(\lambda_3-\lambda_2)\|\vu-\vv^1-\vv^2\|_1\\
&\quad+\cdots + (\lambda_n-\lambda_{n-1})\|\vu-\sum_{j=1}^{n-1}\vv^j\|_1,\\
F_3(\vu,\Lambda)&=\lambda_1\|\vu\|_1+(\lambda_2-\lambda_1)\|\vu\odot\Lambda^1\|_1+(\lambda_3-\lambda_2)\|\vu\odot\Lambda^1\odot\Lambda^2\|_1\\
&\quad +\cdots + (\lambda_n-\lambda_{n-1})\|\vu\odot\Lambda^1\odot\Lambda^2\odot\cdots\odot\Lambda^{n-1}\|_1.
\end{align*}

\begin{theorem}\label{thm:1}Minimizing $F_1(\vu,\vP)$, $F_2(\vu,\vv)$, and $F_3(\vu,\Lambda)$ over $\vP$, $\vv$, and $\Lambda$ with their corresponding constraints given below, respectively, eliminates $\vP$, $\vv$, and $\Lambda$ and obtains the nonconvex sorted $\ell_1$ defined in~\eqref{eqn:nonconvexl1}.
\begin{align*}
R_\lambda(\vu) & = \min_{\vP\in\cP}F_1(\vu,\vP)\\
&= \min_{\{\vv^j\}_{j=1}^{n-1}:\|\vv^j\|_0\leq 1} F_2(\vu,\vv)\\
& = \min_{\{\Lambda^j\}_{j=1}^{n-1}:\Lambda^j_i\in\{0,1\}, \sum_i\Lambda_i^j\geq n-1} F_3(\vu,\Lambda),
\end{align*}
where $\cP$ is the set of all permutation matrices.
\end{theorem}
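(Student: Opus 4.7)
The plan is to prove each of the three equalities by exhibiting an explicit feasible choice of the auxiliary variable that attains $R_\lambda(\vu)$, and then separately showing that no feasible choice does better. The three cases share a common engine: monotonicity of $\lambda$ combined with a simple lower bound on an $\ell_1$ norm whose support has been restricted.

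For $F_1$, the argument is pure rearrangement. Since $\vP$ ranges over permutation matrices, $\vP\lambda$ runs over all permutations of $(\lambda_1,\dots,\lambda_n)$, so by the classical rearrangement inequality $\sum_i (\vP\lambda)_i |u_i|$ is minimized precisely when the smallest $\lambda$'s are paired with the largest $|u_i|$'s, giving $\sum_i \lambda_i |u|_{[i]} = R_\lambda(\vu)$. Attainment is by the permutation sending $\lambda_i$ to the $i$-th largest coordinate of $|\vu|$.

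For $F_2$, introduce the partial residuals $\vw^k = \vu - \sum_{j=1}^k \vv^j$ with $\vw^0 = \vu$, and extend the $\lambda$ sequence by setting $\lambda_0 := 0$. Then
\begin{equation*}
F_2(\vu,\vv) \;=\; \sum_{k=0}^{n-1}(\lambda_{k+1}-\lambda_k)\,\|\vw^k\|_1,
\end{equation*}
and every coefficient $\lambda_{k+1}-\lambda_k$ is nonnegative. Because $\|\vv^j\|_0 \le 1$, the vector $\vw^k$ differs from $\vu$ in at most $k$ coordinates, so by the elementary bound $\|\vw^k\|_1 \ge \sum_{i\in S^c}|u_i| \ge \sum_{i=k+1}^{n}|u|_{[i]}$ (where $S$ is the set of modified indices, $|S|\le k$). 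Substituting and interchanging the two sums, the coefficient of $|u|_{[i]}$ becomes the telescoping $\sum_{k=0}^{i-1}(\lambda_{k+1}-\lambda_k)=\lambda_i$, which gives $F_2 \ge R_\lambda(\vu)$. Equality is realized by choosing $\vv^j = u_{i_j}\ve_{i_j}$, where $i_j$ is the index of the $j$-th largest $|u_i|$; then $\vw^k$ is exactly $\vu$ with its $k$ largest-in-magnitude coordinates zeroed out, and the bound is tight term by term.

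For $F_3$, set $\vy^k := \vu\odot\Lambda^1\odot\cdots\odot\Lambda^k$. The constraint $\Lambda^j_i\in\{0,1\}$ together with $\sum_i \Lambda^j_i \ge n-1$ forces each $\Lambda^j$ to have at most one zero entry, so $\vy^k$ agrees with $\vu$ except on at most $k$ coordinates that have been zeroed. The same lower bound $\|\vy^k\|_1 \ge \sum_{i=k+1}^{n}|u|_{[i]}$ applies, and the identical Abel/telescoping computation yields $F_3 \ge R_\lambda(\vu)$; equality is achieved by placing the lone zero of $\Lambda^j$ at the index of the $j$-th largest $|u_i|$. The main (mild) obstacle is bookkeeping: correctly handling the boundary term $\lambda_1\|\vu\|_1$ by the convention $\lambda_0=0$, and verifying that the selfsame choice of indices $i_1,i_2,\dots$ simultaneously saturates every residual $\vw^k$ (resp.\ $\vy^k$) so that all inequalities are tight at once. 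Everything else is routine.
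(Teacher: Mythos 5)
Your proof is correct and follows essentially the same route as the paper's: a rearrangement argument for $F_1$ (the paper derives it by explicit pairwise exchanges rather than invoking the rearrangement inequality by name, but the content is identical), and for $F_2$ and $F_3$ the same lower bound $\|\vu-\sum_{j=1}^{k}\vv^j\|_1 \ge \sum_{i=k+1}^{n}|\vu|_{[i]}$ followed by the same telescoping regrouping and the same explicit attaining choice. Your write-up is if anything slightly more systematic (the $\lambda_0=0$ convention and the Abel-summation bookkeeping), and you carry out the $F_3$ case explicitly where the paper omits it as analogous.
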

\begin{proof}
Part 1: Without loss of generality, assume that $|u_1|\geq |u_2|\geq\cdots\geq |u_n|$. We show that $F_1(\vu,\vP)\geq F_1(\vu,\vI_n)$ for all $\vP\in\cP$, which means that $\min\limits_\vP F_1(\vu,\vP)=F_1(\vu,\vI_n)=R_\lambda(\vu)$.

Given $\vP\in\cP$. If $(\vP\lambda)_1\neq \lambda_1=(\vP\lambda)_k$ for some $k> 1$, we can exchange the $1$st and the $k$th components of $\vP\lambda$ and obtain $\vP^1$ such that $(\vP^1\lambda)_1=\lambda_1$, otherwise let $\vP^1=\vP$. We have
\begin{align*}
F_1(\vu,\vP^1)- F_1(\vu,\vP)&=\lambda_1|u_1|+(\vP\lambda)_1|u_{k}|-(\vP\lambda)_{1}|u_1|-\lambda_{1}|u_{k}|\\
&=[\lambda_{1}-(\vP\lambda)_1](|u_1|-|u_{k}|)\leq0.
\end{align*}

Similarly, for $j>1$, we can find $\vP^j$ such that $(\vP^j\lambda)_i=\lambda_i$ for $i=1,\cdots,j$ and $F_1(\vu,\vP^j)\leq F_1(\vu,\vP^{j-1})\leq\cdots\leq F_1(\vu,\vP)$. When $j=n$, we have $\vP^n\lambda=\vI_n\lambda$ and $F_1(\vu,\vI_n)\leq F_1(\vu,\vP).$

Part 2: We show the equivalence by two steps: 1) $F_2(\vu,\vv)\geq R_\lambda(\vu)$ for all $\vv$'s satisfying $\|\vv^j\|_0\leq 1$ and 2) $F_2(\vu,\bar{\vv})= R_\lambda(\vu)$ for some $\bar{\vv}$ satisfying the constraint.

The constraint that $\|\vv^j\|_0\leq1$ for all $j$ gives us
\begin{align*}
\left\|\vu-\sum_{j=1}^{k}\vv^j\right\|_1  \geq  \sum_{i=k+1}^n|\vu|_{[i]},
\end{align*}
for $1\leq k\leq n-1$. Thus for any $\vv$ satisfying the constraint $\|\vv^j\|_0\leq 1$, we have
\begin{align*}
F_2(\vu,\vv)& \geq \lambda_1\sum_{i=1}^n|\vu|_{[i]}+(\lambda_2-\lambda_1)\sum_{i=2}^n|\vu|_{[i]}+(\lambda_3-\lambda_2)\sum_{i=3}^n|\vu|_{[i]}\\
&\quad+\cdots + (\lambda_n-\lambda_{n-1})|\vu|_{[n]}\\
& =\lambda_1|\vu|_{[1]}+\lambda_2|\vu|_{[2]}+\cdots+\lambda_n|\vu|_{[n]} = R_\lambda(\vu).
\end{align*}

Let $\bar\vv^j$ be the vector with all zeros except the component corresponding to the $j$th largest absolute value of $\vu$ having the same value as $\vu$. We have
\begin{align*}
\left\|\vu-\sum_{j=1}^{k}\vv^j\right\|_1  =  \sum_{i=k+1}^n|\vu|_{[i]},
\end{align*}
for $1\leq k\leq n-1$, which gives us $F_2(\vu,\bar\vv)=R_\lambda(\vu)$.

Part 3: The proof is similar to that in Part 2 and we omit it here. \qed
\end{proof}
\begin{remark} The constraint $\Lambda^j_i\in\{0,1\}$ in $F_3(\vu,\Lambda)$ can be relaxed to $\Lambda^j_i\in[0,1]$ without changing the equivalence result. We will use the exact relaxed constraint $\Lambda^j_i\in[0,1]$ for $\Lambda$.
\end{remark}

Define $\cV=\{\vv:\|\vv^j\|_0\leq1\}$ and $\cL=\{\Lambda:\Lambda_i^j\in[0,1], \sum_i\Lambda_i^j\geq n-1\}$. Note that the set $\cP$ only has finite number of points in $\vR^{n\times n}$, thus is not continuous; The set $\cV$ is continuous but non-convex; The set  $\cL$ is continuous and convex.

\begin{remark}\label{remark:13} For each $\vP\in\cP$, we can construct a $\Lambda\in \cL$ such that $F_1(\vu,\vP)=F_3(\vu,\Lambda)$. Each column of $\vJ_n-\vP$ corresponds to one vector $\Lambda^j$, i.e., $\Lambda^j$ is the $j$th column of $\vJ_n-\vP$. In addition, if $\vP\in\cP$ is optimal for $F_1(\vu,\vP)$, the corresponding $\Lambda$ is also optimal for $F_3(\vu,\Lambda)$. Thus, we can consider $F_3(\vu,\Lambda)$ as a relaxation to $F_1(\vu,\vP)$.
\end{remark}

\begin{remark} For each $\vP\in\cP$, we can also construct a $\vv\in\cV$ such that $F_1(\vu,\vp)=F_2(\vu,\vv)$. Choose the support of each column of $\vP$ as the support for the corresponding $\vv^j$, i.e, the supports of the $j$th column of $\vP$ and $\vv^j$ are the same. Then $F_2(\vu,\vv)=F_1(\vu,\vP)$ when $\vv^j$ is the projection of $\vu$ onto the support of $\vv^j$. In addition, if $\vP$ is optimal for $F_1(\vu,\vP)$, then $\vv$ is also optimal for $F_2(\vu,\vv)$. Thus $F_2(\vu,\vv)$ is also a relaxation to $F_1(\vu,\vP)$. It can be shown that if $\vv$ is optimal for $F_2(\vu,\vv)$, we can find an optimal $\vP$ for $F_1(\vu,\vP)$ from the support of $\{\vv^j\}_{j=1}^{n-1}$.
\end{remark}

If the $\ell_1$ terms in~\eqref{sec:L1-minimization} and~\eqref{sec:BPDN} are replaced by the nonconvex sorted $\ell_1$ terms, the basis pursuit problem~\eqref{sec:L1-minimization} becomes
\begin{align*}
\Min_\vu \ R_\lambda(\vu)+\iota_{\{\vu:\vA\vu=\vb\}}(\vu),
\end{align*}
and the unconstrained basis pursuit denoising problem~\eqref{sec:BPDN} becomes
\begin{align}\label{pro:bpdn}
\Min_\vu \ R_\lambda(\vu)+\alpha \|\vA\vu-\vb\|^2.
\end{align}
Combining these two problems together into a more general problem
\begin{align}\label{for:problem}
\Min_\vu\ E(\vu):=\ R_\lambda(\vu)+ L(\vu),
\end{align}
where $L(\vu)$ can be any convex loss function. Except compressive sensing, this general problem has more applications in geophysics, image processing, sensor networks, and computer vision. The interested reader is referred to~\cite{bruckstein2009sparse,wright2010sparse} for a comprehensive review of these applications.

Two lemmas stating sufficient and necessary conditions for $\vu^*$ being a local minimizer of $E(\vu)$ are introduced.

\begin{lemma}\label{lemma1}If $\vu^*$ is  a local minimizer of $E(\vu)$, then for any $\vP^*$ minimizing $F_1(\vu^*,\vP)$, $(\vu^*,\Lambda^*)$ with $\Lambda^*$ being constructed from $\vP^*$ as in Remark~\ref{remark:13} is a local minimizer of $E_3(\vu,\Lambda)$.
\end{lemma}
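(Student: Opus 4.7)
The plan is to exploit the inequality $F_3(\vu,\Lambda)\geq R_\lambda(\vu)$ coming from Theorem~\ref{thm:1} together with the equality $F_3(\vu^*,\Lambda^*)=R_\lambda(\vu^*)$ that follows from Remark~\ref{remark:13}. The natural reading of the lemma is that $E_3(\vu,\Lambda):=F_3(\vu,\Lambda)+L(\vu)$, so the goal is to show that $E_3(\vu,\Lambda)\geq E_3(\vu^*,\Lambda^*)$ for all $(\vu,\Lambda)$ sufficiently close to $(\vu^*,\Lambda^*)$ in $\vR^n\times\cL$.

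First I would verify the equality at the candidate point. Since $\vP^*$ minimizes $F_1(\vu^*,\vP)$ over $\cP$, Theorem~\ref{thm:1} gives $F_1(\vu^*,\vP^*)=R_\lambda(\vu^*)$. By Remark~\ref{remark:13}, constructing $\Lambda^{*j}$ as the $j$th column of $\vJ_n-\vP^*$ yields $F_3(\vu^*,\Lambda^*)=F_1(\vu^*,\vP^*)$, and also $\Lambda^*\in\cL$ since each column of $\vJ_n-\vP^*$ is a $\{0,1\}$-vector summing to $n-1$. Hence
\begin{align*}
E_3(\vu^*,\Lambda^*)=F_3(\vu^*,\Lambda^*)+L(\vu^*)=R_\lambda(\vu^*)+L(\vu^*)=E(\vu^*).
\end{align*}

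Next I would chain the global $F_3\geq R_\lambda$ inequality with the local minimality of $\vu^*$ for $E$. By Theorem~\ref{thm:1}, $F_3(\vu,\Lambda)\geq R_\lambda(\vu)$ for every $\Lambda\in\cL$, so $E_3(\vu,\Lambda)\geq E(\vu)$ pointwise on $\vR^n\times\cL$. Pick a neighborhood $\cU$ of $\vu^*$ on which $E(\vu)\geq E(\vu^*)$; then for all $\vu\in\cU$ and all $\Lambda\in\cL$ (in particular for $\Lambda$ in any neighborhood of $\Lambda^*$ intersected with $\cL$),
\begin{align*}
E_3(\vu,\Lambda)\geq E(\vu)\geq E(\vu^*)=E_3(\vu^*,\Lambda^*),
\end{align*}
which is exactly the local minimality of $(\vu^*,\Lambda^*)$ for $E_3$.

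There is essentially no hard step here once the two ingredients are in place; the only thing to be careful about is the domain issue. The variable $\Lambda$ is constrained to $\cL$, so local minimality must be understood relative to $\cL$, and the construction of $\Lambda^*$ via Remark~\ref{remark:13} must be checked to land in $\cL$, which is immediate from the fact that $\vJ_n-\vP^*$ has $\{0,1\}$ entries and column sums equal to $n-1$. No smoothness or convexity of $L$ is needed beyond what is already assumed; the argument is a direct pointwise comparison, so the whole proof reduces to a short chain of inequalities.
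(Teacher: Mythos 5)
Your proposal is correct and follows essentially the same route as the paper's own proof: the pointwise inequality $E_3(\vu,\Lambda)\geq E(\vu)$ combined with the local minimality of $\vu^*$ for $E$ and the equality $E_3(\vu^*,\Lambda^*)=E(\vu^*)$ at the constructed point. You are somewhat more explicit than the paper in verifying that $\Lambda^*\in\cL$ and that the equality at $(\vu^*,\Lambda^*)$ holds, but the argument is the same.
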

\begin{proof}Since $\vu^*$ is a local minimizer of $E(\vu)$, we can find $\epsilon>0$ such that for all $\vu$ satisfying $\|\vu-\vu^*\|<\epsilon$, we have $E(\vu)\geq E(\vu^*)$. Therefore, for all $(\vu,\Lambda)$, satisfying $\|(\vu,\Lambda)-(\vu^*,\Lambda^*)\|<\epsilon$, we have $\|\vu-\vu^*\|<\epsilon$. Thus
\begin{align*}
E_3(\vu,\Lambda)\geq E(\vu)\geq E(\vu^*) = E_3(\vu^*,\Lambda^*),
\end{align*}
for all $(\vu,\Lambda)$ such that $\|(\vu,\Lambda)-(\vu^*,\Lambda^*)\|<\epsilon$. Then $(\vu^*,\Lambda^*)$ is a local minimizer of $E_3(\vu,\Lambda)$.\qed
\end{proof}

\begin{lemma}\label{lemma2}Given fixed $\vu^*$, if for all $\bar{\vP}\in \cP$ minimizing $E_1(\vu^*,\vP)$, we also have $\vu^*$ minimizing $E_1(\vu,\bar\vP)$, then $\vu^*$ is a local minimizer of $E(\vu)$.
\end{lemma}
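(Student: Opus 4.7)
The plan is to exploit the finiteness of the permutation set $\cP$ to lift the hypothesis (which is about a finite collection of convex restrictions $E_1(\cdot,\bar\vP)$) into a local statement about $E(\vu)=\min_{\vP\in\cP}E_1(\vu,\vP)$. Set $E_1(\vu,\vP):=F_1(\vu,\vP)+L(\vu)$, and let $\bar\cP\subseteq\cP$ denote the (finite, nonempty) set of permutation matrices minimizing $F_1(\vu^*,\vP)$; equivalently, the set of minimizers of $E_1(\vu^*,\vP)$ in $\vP$. By Theorem~\ref{thm:1}, $R_\lambda(\vu^*)=F_1(\vu^*,\bar\vP)$ for every $\bar\vP\in\bar\cP$, so $E_1(\vu^*,\bar\vP)=E(\vu^*)$.

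Next, I would establish a local stability property: there exists $\epsilon>0$ such that for every $\vu$ with $\|\vu-\vu^*\|<\epsilon$, every $\vP$ that minimizes $F_1(\vu,\vP)$ lies in $\bar\cP$. To prove this, I would set
\begin{align*}
\delta:=\min_{\vP\in\cP\setminus\bar\cP}F_1(\vu^*,\vP)\;-\;\min_{\vP\in\cP}F_1(\vu^*,\vP)>0,
\end{align*}
which is strictly positive because $\cP$ is finite. Since $\vu\mapsto F_1(\vu,\vP)$ is continuous (in fact $1$-Lipschitz in $\vu$ for each fixed $\vP$, uniformly over the finite set $\cP$), I can choose $\epsilon>0$ so that $|F_1(\vu,\vP)-F_1(\vu^*,\vP)|<\delta/2$ for every $\vP\in\cP$ and every $\vu$ with $\|\vu-\vu^*\|<\epsilon$. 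A triangle-inequality comparison then shows that for $\vP\notin\bar\cP$ and $\bar\vP\in\bar\cP$, $F_1(\vu,\vP)>F_1(\vu,\bar\vP)$, so no $\vP\notin\bar\cP$ can minimize $F_1(\vu,\cdot)$.

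Finally, I would assemble the argument. Fix $\vu$ with $\|\vu-\vu^*\|<\epsilon$ and pick $\vP(\vu)\in\cP$ minimizing $F_1(\vu,\vP)$, so $E(\vu)=E_1(\vu,\vP(\vu))$. By the stability step, $\vP(\vu)\in\bar\cP$. By the hypothesis of the lemma, $\vu^*$ minimizes $E_1(\,\cdot\,,\vP(\vu))$ over all of $\vR^n$ (the function $E_1(\cdot,\vP(\vu))$ is convex in $\vu$, since $F_1(\cdot,\vP(\vu))$ is a nonnegative linear combination of $|u_i|$ and $L$ is convex, so any local minimizer is a global one, but I only need the direct consequence). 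Hence
\begin{align*}
E(\vu)=E_1(\vu,\vP(\vu))\geq E_1(\vu^*,\vP(\vu))=F_1(\vu^*,\vP(\vu))+L(\vu^*)=R_\lambda(\vu^*)+L(\vu^*)=E(\vu^*),
\end{align*}
where the second-to-last equality uses $\vP(\vu)\in\bar\cP$ and Theorem~\ref{thm:1}. This gives $E(\vu)\geq E(\vu^*)$ on the $\epsilon$-ball, so $\vu^*$ is a local minimizer of $E$.

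The main obstacle, and the only nontrivial ingredient, is the local stability step establishing that optimal permutations at nearby points remain optimal at $\vu^*$; ties in the absolute values of the entries of $\vu^*$ enlarge $\bar\cP$ and must be handled carefully, but finiteness of $\cP$ together with the uniform continuity of $F_1$ in $\vu$ cleanly resolves this via the gap $\delta$.
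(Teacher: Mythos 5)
Your proof is correct and follows the same route as the paper: show that in a neighborhood of $\vu^*$ every permutation optimal for $F_1(\vu,\cdot)$ is already optimal for $F_1(\vu^*,\cdot)$, then invoke the hypothesis to get $E(\vu)=E_1(\vu,\vP(\vu))\geq E_1(\vu^*,\vP(\vu))=E(\vu^*)$. In fact your gap-$\delta$ argument supplies the justification for the local-stability claim that the paper merely asserts without proof (your parenthetical ``$1$-Lipschitz'' should really be Lipschitz with constant on the order of $\lambda_n$, and one should note the trivial case $\bar\cP=\cP$, but only uniform continuity over the finite set $\cP$ is needed, so nothing breaks).
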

\begin{proof}
There exists $\epsilon>0$ such that when $\|\vu-\vu^*\|<\epsilon$, we can always find the same $\vP\in \cP$ such that both $R_\lambda(\vu)=F_1(\vu,\vP)$ and $R_\lambda(\vu^*)=F_1(\vu^*,\vP)$ are satisfied. Therefore, we have $E(\vu^*)=R_\lambda(\vu^*)+L(\vu^*)=F_1(\vu^*,\bar{\vP})+L(\vu^*)\leq F_1(\vu,\bar{\vP})+L(\vu)=R_\lambda(\vu)+L(\vu)=E(\vu)$.
\qed\end{proof}
With these two lemmas, we propose two algorithms for solving problem~\eqref{for:problem} with nonconvex sorted $\ell_1$ term in the next two sections.

%%% Iteratively Reweighted $\ell_1$ Minimization %%%
\section{Iteratively Reweighted $\ell_1$ Minimization}\label{sec:alg1}

%\subsection{Algorithm}

It is difficult to solve problem~\eqref{for:problem} directly because of the non-convexity of $R_\lambda(\vu)$. In this section, we apply the equivalence results in the previous section to solve problem~\eqref{for:problem}. Problem~\eqref{for:problem} is equivalent to the following two problems:
\begin{align}
&\Min_{\vu,\vP\in \cP}\ E_1(\vu,\vP):= F_1(\vu,\vP)+L(\vu),\label{for:problem1}\\
&\Min_{\vu,\Lambda\in \cL}\ E_3(\vu,\Lambda):= F_3(\vu,\Lambda)+L(\vu).\label{for:problem3}
\end{align}
There are two variables in $E_1(\vu,\vP)$ (or $E_3(\vu,\Lambda)$) and we can apply the alternating minimization procedure to solve this problem because the problem is easy to solve with one of the two variables is fixed. Fix $\vu$, the variable $\vP$ (or $\Lambda$) can be obtained in closed-form; while the problem for $\vu$ with $\vP$ (or $\Lambda$) fixed can be formulated into a weighted $\ell_1$ minimization problem which is convex and for which there are many existing solvers. The step of updating $\vP$ (or $\Lambda$) is just updating the weights in the weighted $\ell_1$ minimization, and this procedure is essentially an iteratively reweighted $\ell_1$ minimization.

The set of all the weights is fixed and each reweighting is just a permutation of the weights. This is different from the iteratively reweighted $\ell_1$ minimization in~\cite{candes2008enhancing,chartrand2014shrinkage} where each weight is updated independently from the corresponding component of the previous result, i.e., no sorting is needed and the set of the weights is not fixed. The proposed iteratively reweighted $\ell_1$ minimization for~\eqref{for:problem} is summarized in algorithm~\ref{alg:IRL1}, whose convergence result is shown below.
\begin{algorithm}[H]
\caption{Iteratively Reweighted $\ell_1$ Minimization}\label{alg:IRL1}
\label{alg:ADM-1}
\begin{algorithmic}
\State Initialize $\lambda$, $\vu^0$
\For {$l =0,1, \cdots$}
\State Update $\vP^l=\argmin\limits_\vP F_1(\vu^l,\vP)$ with an optimal $\vP$ such that $\vP^l$ is different from $\{\vP^0$, $\vP^1, \cdots, \vP^{l-1}\}$. If there is no optimal $\vP$ satisfying this condition, break.
\State Update $\vu^{l+1}=\argmin\limits_\vu E_1(\vu,\vP^l)$.
\EndFor
\end{algorithmic}
\end{algorithm}
\begin{remark}Because $F_1(\vu,\vP)=\sum_i(\vP\lambda)_i|u_i|$, which depends on $\vP\lambda$, not $\vP$. If there are equivalent components in $\lambda$, then different $\vP$'s may have the same $\vP\lambda$. In the algorithm, we just choose the optimal $\vP^l$ such that $\vP^l\lambda$ is different from \{$\vP^0\lambda$, $\vP^1\lambda, \cdots, \vP^{l-1}\lambda$\}.
\end{remark}

\begin{remark}
The initial $\vu^0$ can be chosen as the output of the $\ell_1$ minimization problem: $\Min\limits_\vu\|\vu\|_1+L(\vu)$. When we use alternating minimization procedure on $F_3(\vu,\Lambda)$, in order to minimize $F_3(\vu^l,\Lambda)$, we can first find an optimal $\vP^l$ from minimizing $F_1(\vu^l,\vP)$ and then construct the corresponding optimal $\Lambda^l$ using Remark~\ref{remark:13}.
\end{remark}

We show that the proposed iteratively reweighted $\ell_1$ minimization can obtain a local optimum of problem~\eqref{for:problem} in finite steps.
\begin{theorem}\label{thm:con1}
Algorithm~\ref{alg:IRL1} will converge in finite steps, and the output $\vu^*$ is a local minimizer of $E(\vu)$. In addition, $(\vu^*,\Lambda^*)$ is a local minimizer of $E_3(\vu,\Lambda)$.
\end{theorem}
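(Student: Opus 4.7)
The plan has three layers: finite termination, the key identity $E(\vu^*)=\min_\vu E_1(\vu,\bar\vP)$ for every optimal $\bar\vP$, and the application of Lemmas~\ref{lemma1} and~\ref{lemma2}. For finite termination, I would note that $\vP^l$ is chosen so that $\vP^l\lambda$ is distinct from all previous $\vP^j\lambda$, and the set $\{\vP\lambda:\vP\in\cP\}$ has at most $n!$ elements; hence the algorithm must break at some iteration $l^*\leq n!$, outputting $\vu^*=\vu^{l^*}$.

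Next I would establish monotonicity of $E$ along the iterates. Since $\vP^l$ is optimal for $F_1(\vu^l,\vP)$, we have $E(\vu^l)=R_\lambda(\vu^l)+L(\vu^l)=E_1(\vu^l,\vP^l)$. The $\vu$-update gives $E_1(\vu^{l+1},\vP^l)\leq E_1(\vu^l,\vP^l)$, and $F_1(\vu^{l+1},\vP^l)\geq R_\lambda(\vu^{l+1})$ by Theorem~\ref{thm:1} yields $E(\vu^{l+1})\leq E_1(\vu^{l+1},\vP^l)$. Chaining these shows $E(\vu^{l+1})\leq E(\vu^l)$.

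The main step, and the trickiest part of the argument, is the following. Let $\bar\vP$ be any permutation that minimizes $F_1(\vu^*,\vP)$. By the termination condition, there exists $k<l^*$ with $\bar\vP\lambda=\vP^k\lambda$, so $E_1(\vu,\bar\vP)=E_1(\vu,\vP^k)$ for all $\vu$. I would then show $\vu^*$ itself achieves $\min_\vu E_1(\vu,\vP^k)$ by sandwiching: on one hand, optimality of $\vP^k$ at $\vu^*$ gives $E_1(\vu^*,\vP^k)=R_\lambda(\vu^*)+L(\vu^*)=E(\vu^*)$; on the other, the minimizer $\vu^{k+1}=\argmin_\vu E_1(\vu,\vP^k)$ satisfies $E_1(\vu^{k+1},\vP^k)\geq E(\vu^{k+1})\geq E(\vu^*)$, using Theorem~\ref{thm:1} again and the monotonicity from the previous step. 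Combined with $E_1(\vu^{k+1},\vP^k)\leq E_1(\vu^*,\vP^k)$ from minimality, equality holds throughout, so $\vu^*$ minimizes $E_1(\vu,\bar\vP)$.

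Having shown that $\vu^*$ minimizes $E_1(\vu,\bar\vP)$ for every optimal $\bar\vP$ in $F_1(\vu^*,\vP)$, Lemma~\ref{lemma2} immediately delivers that $\vu^*$ is a local minimizer of $E(\vu)$. Finally, constructing $\Lambda^*$ from any such optimal $\vP^*$ via Remark~\ref{remark:13} and invoking Lemma~\ref{lemma1} gives that $(\vu^*,\Lambda^*)$ is a local minimizer of $E_3(\vu,\Lambda)$. The principal obstacle, as noted, is the sandwich argument that forces $\vu^*$ to be optimal for every previously-used $\vP^k$ whose weight vector reappears among the optimal permutations at $\vu^*$; this is where the careful bookkeeping of ``distinct $\vP\lambda$'' rather than distinct $\vP$ really matters, since otherwise the termination condition would not couple back to optimality via Lemma~\ref{lemma2}.
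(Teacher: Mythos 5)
Your proposal is correct, and it verifies the hypothesis of Lemma~\ref{lemma2} by a different mechanism than the paper. The shared skeleton is the same: finite termination because $\{\vP\lambda:\vP\in\cP\}$ has at most $n!$ elements, then Lemma~\ref{lemma2} for local optimality of $\vu^*$ and Lemma~\ref{lemma1} for the statement about $(\vu^*,\Lambda^*)$. Where you diverge is the central step. The paper splits into two cases according to whether the objective strictly decreased at the final iteration or plateaued over a terminal block $E_1(\vu^{l_0},\vP^{l_0-1})=\cdots=E_1(\vu^{\bar l},\vP^{\bar l-1})$, and in each case separately identifies which previously visited permutations can still minimize $F_1(\vu^{\bar l},\vP)$ and checks that $\vu^{\bar l}$ minimizes $E_1(\cdot,\vP)$ for exactly those. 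You instead prove a single sandwich: for \emph{any} optimal $\bar\vP$ at $\vu^*$, the termination rule supplies $k<l^*$ with $\bar\vP\lambda=\vP^k\lambda$, and then
\begin{align*}
E(\vu^*)=E_1(\vu^*,\vP^k)\geq \min_\vu E_1(\vu,\vP^k)=E_1(\vu^{k+1},\vP^k)\geq E(\vu^{k+1})\geq E(\vu^*),
\end{align*}
using $F_1\geq R_\lambda$ and the monotone decrease of $E$ along the iterates, which forces $\vu^*$ to attain $\min_\vu E_1(\vu,\bar\vP)$. This uniform argument subsumes both of the paper's cases and makes the role of the ``distinct $\vP\lambda$'' bookkeeping explicit; what you lose relative to the paper is only the finer structural information the case analysis extracts (e.g., which of the visited permutations remain optimal at the limit, which the paper reuses in its subsequent theorem on coordinatewise minima). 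All the individual inequalities you chain are justified by Theorem~\ref{thm:1} and the two update steps of Algorithm~\ref{alg:IRL1}, so the argument is complete.
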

\begin{proof}Since $\vP\in\cP$, there are only finite number of $\vP$'s (the total number of different $\vP$'s is $n!$), and the algorithm will stop in finite steps. Assume that the algorithm stops at step $\bar l$. There are two cases:
\begin{itemize}
\item Case I: $E_1(\vu^{{\bar l}-1},\vP^{{\bar l}-2})>E_1(\vu^{\bar l},\vP^{{\bar l}-1})$. In this case, there is no element in $\{\vP^0,\vP^1,\cdots,\vP^{{\bar l}-2}\}$ being a minimizer of $F_1(\vu^{\bar l},\vP)$ because if $\vP^l$ is a minimizer of $F_1(\vu^{\bar l},\vP)$ for some integer $l\in[0, {\bar l}-2]$, we have
\begin{align*}
E(\vu^{\bar l})&=E_1(\vu^{\bar l},\vP^l)\geq E_1(\vu^{l+1},\vP^l)\\
&\geq E_1(\vu^{{\bar l}-1},\vP^{{\bar l}-2})>E_1(\vu^{\bar l},\vP^{{\bar l}-1})\geq E(\vu^{\bar l}).
\end{align*}
In addition, $\vP^{{\bar l}-1}$ is a minimizer of $F_1(\vu^l,\vP)$, otherwise, we can find an optimal $\vP^{\bar l}$ such that $E_1(\vu^{\bar l},\vP^{\bar l})=E(\vu^{\bar l})< E_1(\vu^{\bar l},\vP^{{\bar l}-1})$. Therefore, $\vP^{{\bar l}-1}$ is the unique minimizer of $F_1(\vu^l,\vP)$, $\vu^{\bar l}$ being a local minimizer of $E(\vu)$ comes from Lemma~\ref{lemma2}.
\item Case II: $E_1(\vu^{l_0-1},\vP^{l_0-2})>E_1(\vu^{l_0},\vP^{l_0-1})=E_1(\vu^{l_0+1},\vP^{l_0})=\cdots=E_1(\vu^{\bar l},\vP^{{\bar l}-1})$ for $l_0\leq {\bar l}-1$. In the same way, we can show that there is no element in $\{\vP^0,\vP^1,\cdots,\vP^{l_0-2}\}$ being a minimizer of $F_1(\vu^{\bar l},\vP)$. In addition, $\vP^{{\bar l}-1}$ is a minimizer of $F_1(\vu^{\bar l},\vP)$. For $l\in [l_0-1, {\bar l}-2]$, we have
\begin{align*}
E(\vu^{\bar l}) = E_1(\vu^{\bar l},\vP^{{\bar l}-1})=E_1(\vu^{l+1},\vP^{l})\leq E_1(\vu^{{\bar l}},\vP^{l}).
\end{align*}
For any $l\in[l_0-1,{\bar l}-2]$ such that the equality satisfies, $\vP^{l}$ is a minimizer of $F_1(\vu^{\bar l},\vP)$, and $\vu^{\bar l}$ is also a minimizer of $E_1(\vu,\vP^l)$. In addition, there is no more minimizer of $F_1(\vu^{\bar l},\vP)$. Then Lemma~\ref{lemma2} tells us that $\vu^{\bar l}$ is a local minimizer.
\end{itemize}
If $\vu^*$ is a local minimizer, then $(\vu^*,\Lambda^*)$ being a local minimizer of $F_3(\vu,\Lambda)$ follows from Lemma~\ref{lemma1}.
\qed\end{proof}

Notice that the function value may not be strictly decreasing, and the next theorem discusses the case when the function values of two consecutive iterations are the same.

\begin{theorem}If $E_1(\vu^{l-1},\vP^{l-2})=E_1(\vu^{l},\vP^{l-1})$ for some $l\leq {\bar l}$, $(\vu^{l-1},\vP^{l-2})$ is a coordinatewise minimum point of $E_1(\vu,\vP)$, i.e., $\vu^{l-1}$ is a minimizer of $E_1(\vu,\vP^{l-2})$ and $\vP^{l-2}$ is a minimizer of $E_1(\vu^{l-1},\vP)$.
\end{theorem}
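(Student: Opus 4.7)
The plan is to exploit the chain of monotone decreases that the alternating minimization in Algorithm~\ref{alg:IRL1} induces between consecutive iterates, then show that equality of the endpoints forces every intermediate inequality to collapse. The first claim, that $\vu^{l-1}$ minimizes $E_1(\vu,\vP^{l-2})$, is essentially free: the update rule in Algorithm~\ref{alg:IRL1} defines $\vu^{l-1}$ as $\argmin_\vu E_1(\vu,\vP^{l-2})$, so nothing has to be proved here beyond citing the step.

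The second claim, that $\vP^{l-2}$ minimizes $E_1(\vu^{l-1},\vP)$, is where the hypothesis is actually used. First I would write down the three one-sided updates that connect $\vu^{l-1}$ to $\vu^l$, namely $E_1(\vu^{l-1},\vP^{l-2})\geq E_1(\vu^{l-1},\vP^{l-1})$ (from the $\vP$-update at step $l-1$, noting that $L(\vu^{l-1})$ is independent of $\vP$ so it cancels in the comparison of $E_1$ values), followed by $E_1(\vu^{l-1},\vP^{l-1})\geq E_1(\vu^l,\vP^{l-1})$ (from the $\vu$-update producing $\vu^l$). Chaining these gives
\begin{align*}
E_1(\vu^{l-1},\vP^{l-2}) \;\geq\; E_1(\vu^{l-1},\vP^{l-1}) \;\geq\; E_1(\vu^l,\vP^{l-1}).
\end{align*}

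Under the equality hypothesis $E_1(\vu^{l-1},\vP^{l-2})=E_1(\vu^l,\vP^{l-1})$, both inequalities above must be equalities, and in particular $E_1(\vu^{l-1},\vP^{l-2})=E_1(\vu^{l-1},\vP^{l-1})$. Since $\vP^{l-1}$ is by construction a minimizer of $F_1(\vu^{l-1},\vP)$ (equivalently of $E_1(\vu^{l-1},\vP)$ as $\vu^{l-1}$ is fixed), this collapse shows $\vP^{l-2}$ attains the same minimum value and is therefore itself a minimizer of $E_1(\vu^{l-1},\vP)$. Together with the first claim, this establishes that $(\vu^{l-1},\vP^{l-2})$ is a coordinatewise minimum point of $E_1$.

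I do not anticipate a real obstacle: the argument is a standard ``sandwich'' trick for alternating minimization, and the only point worth being careful about is making sure that the $L(\vu)$ term is handled correctly when passing between $F_1$ and $E_1$ in the $\vP$-update (it does not depend on $\vP$, so a minimizer of $F_1(\vu,\cdot)$ is also a minimizer of $E_1(\vu,\cdot)$ for fixed $\vu$).
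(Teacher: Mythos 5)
Your proof is correct and follows essentially the same route as the paper: the identical chain $E_1(\vu^{l-1},\vP^{l-2})\geq E_1(\vu^{l-1},\vP^{l-1})\geq E_1(\vu^{l},\vP^{l-1})$, collapsed to equalities by the hypothesis, combined with the fact that $\vP^{l-1}$ minimizes $E_1(\vu^{l-1},\cdot)$ and $\vu^{l-1}$ minimizes $E_1(\cdot,\vP^{l-2})$ by construction. Your explicit remark that $L(\vu)$ is independent of $\vP$, so minimizing $F_1$ and $E_1$ over $\vP$ coincide, is a detail the paper leaves implicit.
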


\begin{proof}
$E_1(\vu^{l-1},\vP^{l-2})=E_1(\vu^{l},\vP^{l-1})$, together with the nonincreasing property of the algorithm
\begin{align*}E_1(\vu^{l-1},\vP^{l-2})\geq E_1(\vu^{l-1},\vP^{l-1})\geq E_1(\vu^{l},\vP^{l-1}),\end{align*}
gives us
\begin{align*}E_1(\vu^{l-1},\vP^{l-2})= E_1(\vu^{l-1},\vP^{l-1})= E_1(\vu^{l},\vP^{l-1}).\end{align*}
Thus
\begin{align*}E_1(\vu^{l-1},\vP^{l-2})&=E_1(\vu^{l-1},\vP^{l-1})=\min_\vP E_1(\vu^{l-1},\vP).
\end{align*}
In addition, the algorithm gives
\begin{align*}E_1(\vu^{l-1},\vP^{l-2})&=\min_\vu E_1(\vu,\vP^{l-2}).\end{align*}
Thus $(\vu^{l-1},\vP^{l-2})$ is a coordinatewise minimum point of $E_1(\vu,\vP)$.
\qed\end{proof}

\begin{remark}If there exists $l<{\bar l}$ such that $E_1(\vu^{l-1},\vP^{l-2})=E_1(\vu^{l},\vP^{l-1})>E_1(\vu^{{\bar l}},\vP^{l-1})$, then $\vu^{l-1}$ and $\vu^l$ may not be local minimizers of $E(\vu)$.
\end{remark}
%\commmy{Perfect if an example is given for this remark.}

%%% Iterative Sorted Thresholding %%%
\section{Iterative Sorted Thresholding} \label{sec:alg2}

In this section, we propose another iterative method for solving problem~\eqref{for:problem}. There are several iterative thresholding algorithms for compressive sensing in literature: iterative shrinkage-thresholding for $\ell_1$~\cite{daubechies2004iterative}, iterative hard thresholding~\cite{blumensath2009iterative}, iterative half thresholding for $\ell_{1/2}$~\cite{xu2012regularization}. In this section, we shall establish a thresholding algorithm for problem~\eqref{for:problem}, which generalizes iterative shrinkage-thresholding and iterative hard thresholding.

Assume that $L:\vR^n\rightarrow \vR$ is a smooth, bounded below, convex function of type $C^{1,1}$, i.e., continuously differentiable with Lipschitz continuous gradient:
\begin{align*}
\|\nabla L(\vu)-\nabla L(\vv)\|\leq L_L	\|\vu-\vv\|
\end{align*}
for all $\vu,\vv\in\vR^n$. The iterative sorted thresholding is described in Algorithm~\ref{alg:IST}, and its convergence is shown in Theorem~\ref{thm:ist_converge}.
\begin{algorithm}[H]
\caption{Iteratively Sorted Thresholding}\label{alg:IST}
\begin{algorithmic}
\State Initialize $\vu^0$
\For {$l =0,1, \cdots$}
\State Find $\vu^{l+1}=\argmin_\vu\, \beta R_\lambda(\vu)+{1\over 2}\|\vu-(\vu^l-\beta\nabla L(\vu^l))\|^2$.
\EndFor
\end{algorithmic}
\end{algorithm}

\begin{remark}The iteration can be expressed as
\begin{align*}
\vu^{l+1}\in \prox_{\beta R_\lambda}(\vu^l-\beta\nabla L(\vu^l))
\end{align*}
and the proximal operator can be evaluated in closed form (see Lemma~\ref{lemma:prox}).
\end{remark}

\begin{lemma}\label{lemma:prox}The proximal operator of $R_\lambda$ can be evaluated as
\begin{align*}
\prox_{R_\lambda}(\vx)=\max(|\vx|-\vP\lambda,\vzero)\odot\sign(\vx),
\end{align*}
for any $\vP\in\cP$ such that $R_\lambda (\vx)=\sum_{i=1}^n(\vP\lambda)_ix_i$. Here $\max$ and $\sign$ are both component-wise.
\end{lemma}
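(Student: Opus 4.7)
The plan is to exploit the representation from Theorem~\ref{thm:1}: since $R_\lambda(\vu) = \min_{\vP \in \cP} F_1(\vu, \vP) = \min_{\vP \in \cP} \sum_{i=1}^n (\vP\lambda)_i |u_i|$, the defining minimization of the proximal operator becomes a joint minimization
\begin{align*}
\prox_{R_\lambda}(\vx) = \argmin_{\vu} \min_{\vP \in \cP} \Big[ \sum_{i=1}^n (\vP\lambda)_i |u_i| + \tfrac{1}{2}\|\vu - \vx\|^2 \Big].
\end{align*}
Since $\cP$ is finite, I would swap the two minimizations. For each fixed $\vP$, the inner problem in $\vu$ decouples across coordinates into standard weighted soft-thresholding, giving $u_i^{*}(\vP) = \max(|x_i| - (\vP\lambda)_i, 0)\,\sign(x_i)$. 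Substituting back and using the elementary identity $(\vP\lambda)_i |u_i^*(\vP)| + \tfrac{1}{2}(u_i^*(\vP) - x_i)^2 = \tfrac{1}{2}x_i^2 - \tfrac{1}{2}\max(|x_i| - (\vP\lambda)_i, 0)^2$ reduces the outer problem to maximizing $G(\vP) := \sum_{i=1}^n \max(|x_i| - (\vP\lambda)_i, 0)^2$ over $\vP \in \cP$.

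The crux, and what I expect to be the main technical obstacle, is to show that every permutation $\vP^{*}$ achieving $R_\lambda(\vx) = F_1(\vx, \vP^*)$ also maximizes $G$. The idea is a pairwise rearrangement argument driven by convexity of $f(t) := \max(t, 0)^2$. For indices $i, j$ with $|x_i| \geq |x_j|$ and weights $w_a \leq w_b$, setting $u = |x_j| - w_b$, $\alpha = |x_i| - |x_j|$, $\beta = w_b - w_a$ and applying the standard convexity estimate $f(u+\alpha) + f(u+\beta) \leq f(u) + f(u+\alpha+\beta)$ yields
\begin{align*}
f(|x_i| - w_a) + f(|x_j| - w_b) \geq f(|x_i| - w_b) + f(|x_j| - w_a).
\end{align*}
Iterating this swap over adjacent transpositions shows that $G(\vP)$ is maximized exactly when the smallest components of $\lambda$ are paired with the largest magnitudes in $|\vx|$\,---\,precisely the ordering that minimizes $\sum_i (\vP\lambda)_i |x_i|$, as already established by the analogous (and simpler, linear) rearrangement in Part~1 of Theorem~\ref{thm:1}.

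Combining the steps, any optimal $\vP^*$ for $R_\lambda(\vx)$ is also optimal for $G$, so the prox minimizer is $\vu^*(\vP^*) = \max(|\vx| - \vP^*\lambda, \vzero) \odot \sign(\vx)$, which is the asserted formula. Ties within $|\vx|$ or within $\lambda$ merely enlarge the set of optimal permutations\,---\,on those coordinates the pairwise swap inequality becomes equality\,---\,so the formula remains valid for \emph{any} admissible $\vP^*$ named in the statement.
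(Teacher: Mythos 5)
Your proof is correct and follows essentially the same route as the paper: minimize jointly over $(\vu,\vP)$, eliminate $\vu$ by weighted soft-thresholding, and then settle the optimal $\vP$ by a pairwise exchange argument. The only cosmetic difference is that the paper writes the reduced objective as $\sum_i f_{\lambda_i}((\vP^T\vx)_i)$ with the Huber-type envelope $f_{\lambda_i}$ and shows $f_{\lambda_i}-f_{\lambda_j}$ is decreasing in $|x|$, which is exactly the exchange inequality you obtain from convexity of $\max(\cdot,0)^2$ after subtracting the constant $\tfrac12\|\vx\|^2$.
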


\begin{proof}Evaluating the proximal operator is equivalent to solving the problem
\begin{align}\label{for:ist_u}
\Min_\vu\, R_\lambda(\vu)+{1\over 2}\|\vu-\vx\|_2^2,
\end{align}
which is the same as the following problem with additional variable $\vP$:
\begin{align}\label{for:ist_up}
\Min_{\vu,\vP}\, \sum_{i=1}^n(\vP\lambda)_i|u_i|+{1\over 2}\|\vu-\vx\|_2^2.
\end{align}
To solve problem~\eqref{for:ist_up}, we eliminate the variable $\vu$ from the objective function.
\begin{align*}
&\,\quad\min_\vu\sum_{i=1}^n(\vP\lambda)_i|u_i|+{1\over 2}\|\vu-\vx\|_2^2\\
&=\min_\vu\sum_{i=1}^n\lambda_i|(\vP^T\vu)_i|+{1\over 2}\sum_{i=1}^n((\vP^T\vu)_i-(\vP^T\vx)_i)^2\\
&=\sum_{i=1}^nf_{\lambda_i}((\vP^T\vx)_i),
\end{align*}
where
\begin{align*}
f_{\lambda_i}(x)=\left\{\begin{array}{ll}{1\over 2}x^2, &\mbox{ if } |x|< \lambda_i, \\ \lambda_i|x|-{1\over 2}\lambda_i^2, &\mbox{ if }|x|\geq \lambda_i.\end{array}\right.
\end{align*}
The optimal $\vu$ is chosen to be
\begin{align*}
\vu&=\vP(\vP^T\vu)=\vP[\max(|\vP^T\vx|-\lambda,\vzero)\odot\sign(\vP^T\vx)]\\
&=\max(|\vx|-\vP\lambda,\vzero)\odot\sign(\vx).
\end{align*}

Next, we shall find an optimal $\vP$ for problem~\eqref{for:ist_up}. If $\lambda_i<\lambda_j$, the function $f_{\lambda_i}-f_{\lambda_j}$ can be expressed as
\begin{align*}
f_{\lambda_i}(x)-f_{\lambda_j}(x)=\left\{\begin{array}{ll}0, &\mbox{ if } |x|< \lambda_i, \\-{1\over2}(|x|-\lambda_i)^2, & \mbox{ if } \lambda_i\leq|x|\leq \lambda_j,\\ -{1\over2}(\lambda_j-\lambda_i)(2|x|-\lambda_i-\lambda_j), &\mbox{ if }|x|> \lambda_j.\end{array}\right.
\end{align*}
Therefore, the even function $f_{\lambda_i}(x)-f_{\lambda_j}(x)$ is decreasing when $x\geq 0$, i.e.,
\begin{align}\label{for:ist_1}
f_{\lambda_i}(x_i)-f_{\lambda_j}(x_i) \geq f_{\lambda_i}(x_j) -f_{\lambda_j}(x_j)
\end{align}
if $|x_i|\leq |x_j|$. Rearranging~\eqref{for:ist_1} gives us
\begin{align}
f_{\lambda_i}(x_i)+f_{\lambda_j}(x_j) \geq f_{\lambda_i}(x_j)+ f_{\lambda_j}(x_i).
\end{align}
Applying the same technique in part I of the proof for Theorem~\ref{thm:1}, we show that $|\vP^T\vx|$ is in decreasing order for optimal $\vP$'s. The definition of $R_\lambda$ gives us that $R_\lambda(\vx)=\sum_{i=1}^n\lambda_i|(\vP^T\vx)_i|=\sum_{i=1}^n(\vP\lambda)_i|x_i|$ for optimal $\vP$'s.
\qed\end{proof}

\begin{remark}The proximal operator can be multi-valued. For different optimal $\vP$, the output can be different. For example, when $\lambda = (0,1)$ and $\vx=(1,1)$, then both $(1,0)$ and $(0,1)$ are optimal for problem~\eqref{for:ist_u}.
\end{remark}

Before proving the convergence, we state and prove two lemmas.

\begin{lemma}\label{lemma4}If there exists $\vu^*$ such that
\begin{align}\label{for:ist_lemma4_1}
\vu^*\in\prox_{R_\lambda}(\vw^*),
\end{align}
then
\begin{itemize}
\item $|w^*_i|\geq |w^*_j|$ if $|u^*_i|>|u^*_j|$;
\item $|w^*_i|= |w^*_j|$ if $|u^*_i|=|u^*_j|>0$;
\item In addition, if $|\vu^*|$ is in decreasing order, i.e., $|u^*_1|\geq |u^*_2|\geq \cdots\geq|u^*_{I-1}|>|u^*_I|=\cdots=|u^*_n|=0$, we have $|w^*_1|\geq |w^*_2|\geq \cdots\geq|w^*_{I}|$, and $|w^*_i|\geq|w^*_j|$ for all $i<I$ and $j\geq I$.
\end{itemize}
\end{lemma}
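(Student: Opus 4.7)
The plan is to invoke Lemma~\ref{lemma:prox} to fix an optimal permutation $\vP$ witnessing the prox relation $\vu^*\in\prox_{R_\lambda}(\vw^*)$, and then exploit the structural fact buried in the proof of that lemma: for any such optimal $\vP$, the vector $|\vP^T\vw^*|$ must be in decreasing order, equivalently, the weights $(\vP\lambda)_1,\ldots,(\vP\lambda)_n$ are inversely sorted with respect to $|\vw^*|$. In particular, for any indices $i,j$,
\begin{align*}
|w^*_i|>|w^*_j|\quad\Longrightarrow\quad (\vP\lambda)_i\leq(\vP\lambda)_j,
\end{align*}
and the explicit formula $|u^*_k|=\max(|w^*_k|-(\vP\lambda)_k,0)$ holds componentwise.

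For the first bullet, I would argue by contradiction: suppose $|u^*_i|>|u^*_j|$ but $|w^*_i|<|w^*_j|$. The sorting property above gives $(\vP\lambda)_i\geq(\vP\lambda)_j$, hence $|w^*_i|-(\vP\lambda)_i<|w^*_j|-(\vP\lambda)_j$, and passing to nonnegative parts yields $|u^*_i|\leq|u^*_j|$, a contradiction. For the second bullet, if $|u^*_i|=|u^*_j|>0$, positivity forces the $\max$ to be strict at both indices, so $|u^*_i|=|w^*_i|-(\vP\lambda)_i$ and $|u^*_j|=|w^*_j|-(\vP\lambda)_j$. If, say, $|w^*_i|>|w^*_j|$, the sorting property again gives $(\vP\lambda)_i\leq(\vP\lambda)_j$, forcing $|u^*_i|>|u^*_j|$, a contradiction; the symmetric case is identical, so $|w^*_i|=|w^*_j|$.

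For the third bullet, I would apply the first two bullets consecutively along the chain $1,2,\ldots,I$: at each step, either $|u^*_k|>|u^*_{k+1}|$ and bullet one gives $|w^*_k|\geq|w^*_{k+1}|$, or $|u^*_k|=|u^*_{k+1}|>0$ and bullet two gives equality; for the final link $k=I-1$, the strict drop $|u^*_{I-1}|>0=|u^*_I|$ is exactly the hypothesis of bullet one. For the comparison between $i<I$ and $j\geq I$, note $|u^*_i|\geq|u^*_{I-1}|>0=|u^*_j|$, so bullet one again gives $|w^*_i|\geq|w^*_j|$.

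The only delicate point I anticipate is that $\vP$ is not unique when $\lambda$ or $|\vw^*|$ has repeated entries, but the lemma needs only the existence of one such $\vP$ and the sorting property holds for every optimal $\vP$, so this does not cause real trouble; no further ingredients beyond Lemma~\ref{lemma:prox} are needed.
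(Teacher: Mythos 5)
Your proposal is correct and follows essentially the same route as the paper: it fixes an optimal permutation $\vP^*$ from the prox characterization in Lemma~\ref{lemma:prox}, uses the fact that for such a $\vP^*$ the weights $(\vP^*\lambda)$ are anti-sorted relative to $|\vw^*|$, and derives each bullet by the same componentwise contradiction. The only (harmless) difference is that you obtain the third bullet by chaining the first two, whereas the paper repeats the contradiction argument directly for consecutive indices.
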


\begin{proof} Assumption~\eqref{for:ist_lemma4_1} ensures the existence of $\vP^*\in \{\vP\in\cP:R_\lambda(\vw^*)=\sum_{k=1}^n(\vP\lambda)_k|w^*_k|\}$ such that
\begin{align}\label{for:ist_lemma4_2}
|\vu^*|=\max(|\vw^*|-\vP^*\lambda,\vzero).
\end{align}

Part I:
If $|u^*_i|>|u^*_{j}|$,~\eqref{for:ist_lemma4_2} shows that
\begin{align}\label{for:ist_lemma4_3}
|w^*_i|-(\vP^*\lambda)_i =|u^*_i|>|u^*_j|\geq|w^*_j|-(\vP^*\lambda)_j.
\end{align}
Proof by contradiction: Assume $|w^*_i|< |w^*_j|$, then $(\vP^*\lambda)_i\geq(\vP^*\lambda)_j$. Therefore,
\begin{align*}
|w^*_i|-(\vP^*\lambda)_i <|w^*_j|-(\vP^*\lambda)_j,
\end{align*}
which is a contradiction to~\eqref{for:ist_lemma4_3}. Thus $|w^*_i|\geq|w^*_j|$.

Part II: If $|u^*_i|=|u^*_j|>0$,~\eqref{for:ist_lemma4_2} shows that
\begin{align}\label{for:ist_lemma4_4}
|w^*_i|-(\vP^*\lambda)_i =|u^*_i|=|u^*_j|= |w^*_j|-(\vP^*\lambda)_j.
\end{align}
Proof by contradiction: Without loss of generality, assume $|w^*_i|<|w^*_j|$, we have $(\vP^*\lambda)_i\geq (\vP^*\lambda)_j$. Therefore,
\begin{align*}
|w^*_i|-(\vP^*\lambda)_i < |w^*_j|-(\vP^*\lambda)_j,
\end{align*}
which is a contradiction to~\eqref{for:ist_lemma4_4}. Thus $|w^*_i|=|w^*_j|$.

Part III: Proof by contradiction: Assume $|w^*_i|<|w^*_{i+1}|$ for $i<I$, we have $(\vP^*\lambda)_i\geq(\vP^*\lambda)_{i+1}$. Then
\begin{align}
|u^*_i|=|w^*_i|-(\vP^*\lambda)_i<|w^*_{i+1}|-(\vP^*\lambda)_{i+1}\leq |u^*_{i+1}|,
\end{align}
which a contradiction to the assumption that $|\vu^*|$ is in decreasing order. Thus $|w^*_i|\geq |w^*_{i+1}|$ for all $i<I$.

In addition if $i<I$ and $j\geq I$, then $|u^*_i|>|u^*_j|=0$, Part I gives us that $|w^*_{i}|\geq |w^*_j|$.
\qed\end{proof}

\begin{lemma}\label{lemma5} If there exists $\vu^*$ such that
\begin{align}\label{for:ist_iff_1}
\vu^*\in\prox_{\beta R_\lambda}(\vu^*-\beta\nabla L(\vu^*)),
\end{align}
then $\vu^*$ is a local minimizer of $E(\vu)$.
\end{lemma}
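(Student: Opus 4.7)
The strategy is to apply Lemma~\ref{lemma2}: it suffices to show that for every $\bar\vP \in \cP$ minimizing $F_1(\vu^*, \vP)$, the vector $\vu^*$ is a minimizer of $E_1(\cdot, \bar\vP) = F_1(\cdot, \bar\vP) + L$. Because $F_1(\cdot, \bar\vP)$ is a (nonnegatively) weighted $\ell_1$ norm and $L$ is convex, $E_1(\cdot, \bar\vP)$ is convex, so it is enough to verify the first-order condition $-\nabla L(\vu^*) \in (\bar\vP\lambda) \odot \partial\|\vu^*\|_1$ at $\vu^*$.

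First I would unpack the fixed-point assumption~\eqref{for:ist_iff_1}. By Lemma~\ref{lemma:prox} there exists $\vP^* \in \cP$ optimal for $F_1(\vw^*,\vP)$, where $\vw^* := \vu^* - \beta\nabla L(\vu^*)$, such that
\begin{align*}
\vu^* = \max(|\vw^*| - \beta \vP^*\lambda, \vzero) \odot \sign(\vw^*).
\end{align*}
Reading this componentwise: if $u^*_i \neq 0$ then $\sign(w^*_i) = \sign(u^*_i)$ and $|u^*_i| = |w^*_i| - \beta(\vP^*\lambda)_i$, yielding $\nabla L(\vu^*)_i = -(\vP^*\lambda)_i \sign(u^*_i)$; if $u^*_i = 0$ then $|w^*_i| \leq \beta(\vP^*\lambda)_i$, yielding $|\nabla L(\vu^*)_i| = |w^*_i|/\beta \leq (\vP^*\lambda)_i$. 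These are exactly the first-order conditions for $\vu^*$ relative to $\vP^*$; the task is then to upgrade them to every admissible $\bar\vP$.

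For the nonzero coordinates, group the indices as $G_v = \{i : |u^*_i| = v\}$ for each $v > 0$. Lemma~\ref{lemma4} Part II gives $|w^*_i| = |w^*_j|$ whenever $i, j \in G_v$, and combined with $\beta(\vP^*\lambda)_i = |w^*_i| - v$ this forces $(\vP^*\lambda)_i$ to be constant on $G_v$. Hence the multiset of weights that any optimal $\bar\vP$ for $F_1(\vu^*,\vP)$ places on $G_v$ consists of a single value repeated, so $(\bar\vP\lambda)_i = (\vP^*\lambda)_i$ for every $i$ with $u^*_i \neq 0$, and the first-order condition transfers verbatim. For the zero coordinates $Z = \{i : u^*_i = 0\}$, Lemma~\ref{lemma4} Part III implies that $\vP^*$ assigns to $Z$ precisely the $|Z|$ largest entries of $\lambda$, pairing the smallest such weight, denoted $\lambda_{\min}^Z$, with the index $i_0 \in Z$ that maximizes $|w^*_j|$ over $j \in Z$. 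The fixed-point inequality evaluated at $i_0$ then gives $\max_{j \in Z} |w^*_j| = |w^*_{i_0}| \leq \beta \lambda_{\min}^Z$, so for every $i \in Z$ and every $\bar\vP$ optimal for $F_1(\vu^*,\vP)$,
\begin{align*}
|\nabla L(\vu^*)_i| = \tfrac{1}{\beta}|w^*_i| \leq \lambda_{\min}^Z \leq (\bar\vP\lambda)_i,
\end{align*}
the final inequality because $\bar\vP$ can only permute the $Z$-slot weights among indices in $Z$.

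Combining the two cases verifies the first-order optimality condition for $\vu^*$ relative to every $\bar\vP$ that minimizes $F_1(\vu^*,\vP)$; convexity of $E_1(\cdot,\bar\vP)$ then promotes stationarity to global minimality, and Lemma~\ref{lemma2} concludes that $\vu^*$ is a local minimizer of $E$. The main technical obstacle is the zero-coordinate argument: the raw fixed-point inequality only controls each $|w^*_i|$ against its own $(\vP^*\lambda)_i$, and one must exploit the sorting structure of Lemma~\ref{lemma4} Part III --- specifically that $\vP^*$ pairs the largest $|w^*|$ in $Z$ with the smallest $\lambda$ in the $Z$-slot --- to obtain a uniform bound that is stable under every admissible repermutation of the $Z$-slot weights.
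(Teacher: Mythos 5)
Your overall strategy coincides with the paper's: reduce to Lemma~\ref{lemma2}, read the fixed-point relation componentwise through Lemma~\ref{lemma:prox}, and then transfer the resulting first-order conditions from the particular permutation $\vP^*$ produced by the proximal operator to every permutation $\bar\vP$ that is optimal for $F_1(\vu^*,\cdot)$. The componentwise optimality conditions you extract are correct, and your level-set bookkeeping with the sets $G_v$ is a legitimate substitute for the paper's ``assume $|\vu^*|$ is in decreasing order'' normalization.

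There is, however, one step that both of your transfer arguments silently rely on and that you never establish: that $\vP^*$ itself is optimal for $F_1(\vu^*,\cdot)$ (the paper's assertion $\vP^*\in\hat\cP$, proved as the first step of its argument). In your nonzero-coordinate paragraph, knowing that $(\vP^*\lambda)$ is constant on each $G_v$ only tells you that the multiset an optimal $\bar\vP$ places on $G_v$ is a single repeated value if you already know that the multiset $\vP^*$ places on $G_v$ is the optimal one; a priori $\vP^*$ is only optimal for $F_1(\vw^*,\cdot)$, and that does not by itself force $(\vP^*\lambda)_i\leq(\vP^*\lambda)_j$ when $|u^*_i|>|u^*_j|$ but $|w^*_i|=|w^*_j|$. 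Likewise, in your zero-coordinate paragraph, Lemma~\ref{lemma4} Part III is a statement about the ordering of $|\vw^*|$ and says nothing about which entries of $\lambda$ the permutation $\vP^*$ assigns to $Z$; the claim that $\vP^*$ puts the $|Z|$ largest weights on $Z$ is exactly the assertion $\vP^*\in\hat\cP$ again. The gap is fixable by the paper's short contradiction argument: if $|u^*_i|>|u^*_j|$ but $(\vP^*\lambda)_i>(\vP^*\lambda)_j$, then optimality of $\vP^*$ for $\vw^*$ gives $|w^*_i|\leq|w^*_j|$, whence $|w^*_i|-\beta(\vP^*\lambda)_i<|w^*_j|-\beta(\vP^*\lambda)_j$, contradicting $|u^*_i|>|u^*_j|$ via the fixed-point identity. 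With that step inserted, your proof goes through and is essentially the paper's.
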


\begin{proof}Without loss of generality, we assume that $|\vu^*|$ is in decreasing order. If $|\vu^*|$ is not in decreasing order, we can find $\vP\in \cP$ such that $|\vP^T\vu^*|$ is in decreasing order and reformulate function $L$ accordingly. When $L(\vu)=\alpha\|\vA\vu-\vb\|^2$, we can just rearrange the columns of $\vA$. %, and~\eqref{for:ist_iff_1} is equvalent to

Let $\vw^*=\vu^*-\beta\nabla L(\vu^*)$ and define two subsets of $\cP$ as follows:
\begin{align*}
\cP^*:&=\{\vP\in\cP:R_\lambda(\vw^*)=\sum_{i=1}^n(\vP\lambda)_i|w^*_i|\},\\
\hat\cP:&=\{\vP\in\cP:R_\lambda(\vu^*)=\sum_{i=1}^n(\vP\lambda)_i|u^*_i|\}.
\end{align*}
Assumption~\eqref{for:ist_iff_1} ensures the existence of $\vP^*\in\cP^*$ such that
\begin{align}\label{for:assume}
\vu^*=\max(|\vw^*|-\beta\vP^*\lambda,\vzero)\odot\sign(\vw^*).
\end{align}
First of all, we show that $\vP^*\in\hat\cP$, i.e., $(\vP^*\lambda)_i\leq (\vP^*\lambda)_j$ if $|u^*_i|>|u^*_j|$. If $|u^*_i|>|u^*_{j}|$,~\eqref{for:assume} shows that
\begin{align}\label{for:ist_lm_1}
|w^*_i|-\beta(\vP^*\lambda)_i =|u^*_i|>|u^*_j|\geq|w^*_j|-\beta(\vP^*\lambda)_j.
\end{align}
Proof by contradiction: Assume $(\vP^*\lambda)_i> (\vP^*\lambda)_j$, then $|w^*_i|\leq|w^*_j|$ because $\vP^*\in\cP^*$. Therefore,
\begin{align*}
|w^*_i|-\beta(\vP^*\lambda)_i <|w^*_j|-\beta(\vP^*\lambda)_j,
\end{align*}
which is a contradiction to~\eqref{for:ist_lm_1}. Thus $\vP^*\in\hat\cP$.

In order to show that $\vu^*$ is a local minimizer, we shall show that
\begin{align}\label{for:ist_lemma5_2}
\vu^*=\max(|\vw^*|-\beta\vP\lambda,\vzero)\odot\sign(\vw^*)
\end{align}
for all $\vP\in\hat\cP$ from Lemma~\ref{lemma2}.

Comparing~\eqref{for:assume} and~\eqref{for:ist_lemma5_2}, in order to prove the second one, we need to show
\begin{itemize}
\item $(\vP\lambda)_i=(\vP^*\lambda)_i$ for $i<I$ and $\vP\in\hat\cP$,
\item $|w^*_j|-\beta(\vP\lambda)_j\leq 0$ for $j\geq I$ and $\vP\in\hat\cP$,
\end{itemize}
where $I=\min\{i:|u_i|=0\}$.

Firstly, we consider the case when $i<I$. Combining the results in Lemma~\ref{lemma4} and $\vP^*\in\hat\cP$, we have
\begin{align*}
&|u^*_1|\geq |u^*_2|\geq\cdots\geq |u^*_{I-1}|> |u^*_j|=0,\\
&|w^*_1|\geq |w^*_2|\geq\cdots\geq |w^*_{I-1}|\geq |w^*_j|,\\
&(\vP^*\lambda)_1\leq(\vP^*\lambda)_2\leq \cdots\leq(\vP^*\lambda)_{I-1}\leq (\vP^*\lambda)_{j},
\end{align*}
for $j\geq I$. For any $\vP\in\hat\cP$ and $i<I\leq j$, $|u^*_i|>|u^*_j|=0$ implies $(\vP\lambda)_i\leq (\vP\lambda)_j$. Therefore, the set $\{(\vP\lambda)_i\}_{i=1}^{I-1}$ is the same for all $\vP\in\hat\cP$. We shall show that $(\vP\lambda)_{i-1}\leq(\vP\lambda)_{i}$ for all $i<I$ and $\vP\in\hat\cP$.

Proof by contradiction: Assume that $i_0$ is the first index such that $(\vP\lambda)_{i_0}>(\vP\lambda)_{i_0+1}$ for some $\vP\in\hat\cP$. Then there exists $i_1<I$ such that $(\vP\lambda)_{i_0}>(\vP\lambda)_{i_1}$ and $|u^*_{i_0}|>|u^*_{i_1}|$, which contradicts $\vP\in\hat\cP$.

Secondly, we consider the case when $j\geq I$. $\vP^*\in\cP^*$ implies $\max_{j\geq I}|w^*_j|\leq \min_{j\geq I}(\beta\vP^*\lambda)_j$. Thus for any $\vP\in\hat\cP$, we have $|w^*_j|\leq (\beta\vP\lambda)_j$ for all $j\geq I$.
\qed\end{proof}

\begin{theorem}\label{thm:ist_converge}
If $\beta<1/L_L$, the iterative sorted thresholding converges to a local optimum of~\eqref{for:problem}.
\end{theorem}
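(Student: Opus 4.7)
The plan is to follow the classical proximal-gradient template adapted to nonconvex nonsmooth objectives: establish a sufficient-decrease inequality, infer $\vu^{l+1}-\vu^l\to\vzero$, extract a convergent subsequence to a fixed point of the proximal map, and finally invoke Lemma~\ref{lemma5} to identify that fixed point as a local minimizer.

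The sufficient-decrease step starts from the defining optimality of $\vu^{l+1}$ as the minimizer of $\beta R_\lambda(\vu)+\tfrac12\|\vu-(\vu^l-\beta\nabla L(\vu^l))\|^2$. Comparing the value at $\vu^{l+1}$ with that at $\vu^l$ and expanding the square gives
$$R_\lambda(\vu^{l+1})+\langle \nabla L(\vu^l),\vu^{l+1}-\vu^l\rangle+\tfrac{1}{2\beta}\|\vu^{l+1}-\vu^l\|^2\le R_\lambda(\vu^l).$$
Combining this with the standard descent lemma for a $C^{1,1}$ function, $L(\vu^{l+1})\le L(\vu^l)+\langle\nabla L(\vu^l),\vu^{l+1}-\vu^l\rangle+\tfrac{L_L}{2}\|\vu^{l+1}-\vu^l\|^2$, yields
$$E(\vu^{l+1})\le E(\vu^l)-\Bigl(\tfrac{1}{2\beta}-\tfrac{L_L}{2}\Bigr)\|\vu^{l+1}-\vu^l\|^2.$$
Since $\beta<1/L_L$ and $E$ is bounded below (because $R_\lambda\ge 0$ and $L$ is bounded below by assumption), telescoping gives $\sum_l\|\vu^{l+1}-\vu^l\|^2<\infty$ and in particular $\vu^{l+1}-\vu^l\to\vzero$.

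Next, under a mild coercivity assumption on $E$ (satisfied, e.g., when $L(\vu)=\alpha\|\vA\vu-\vb\|^2$ and $\vA$ has full column rank on the relevant subspace, or by adding a small Tikhonov term), the monotone decrease of $E(\vu^l)$ keeps the iterates bounded, so some subsequence $\vu^{l_k}\to\vu^*$. Because $\vu^{l+1}-\vu^l\to\vzero$, the corresponding subsequence $\vu^{l_k+1}$ also converges to $\vu^*$. Each $\vu^{l_k+1}$ lies in $\prox_{\beta R_\lambda}(\vw^{l_k})$ with $\vw^{l_k}=\vu^{l_k}-\beta\nabla L(\vu^{l_k})\to\vw^*:=\vu^*-\beta\nabla L(\vu^*)$ by continuity of $\nabla L$. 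By Lemma~\ref{lemma:prox} each $\vu^{l_k+1}$ corresponds to a permutation $\vP^{l_k}\in\cP$ optimal for $R_\lambda(\vw^{l_k})$ such that $\vu^{l_k+1}=\max(|\vw^{l_k}|-\beta\vP^{l_k}\lambda,\vzero)\odot\sign(\vw^{l_k})$. Since $\cP$ is finite, we may pass to a further subsequence along which $\vP^{l_k}\equiv\vP^*$ is constant. Continuity of $\vw\mapsto\sum_i(\vP^*\lambda)_i|w_i|$ together with the optimality of $\vP^{l_k}$ at $\vw^{l_k}$ shows $\vP^*$ is optimal at $\vw^*$; passing to the limit in the closed-form expression gives $\vu^*=\max(|\vw^*|-\beta\vP^*\lambda,\vzero)\odot\sign(\vw^*)$, i.e.\ $\vu^*\in\prox_{\beta R_\lambda}(\vu^*-\beta\nabla L(\vu^*))$. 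Lemma~\ref{lemma5} then certifies $\vu^*$ as a local minimizer of $E$, and because $E(\vu^l)$ is monotone the whole sequence $\{E(\vu^l)\}$ converges to $E(\vu^*)$, completing the convergence claim.

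The main obstacle is the subsequential limit argument in the proximal inclusion. The proximal map $\prox_{\beta R_\lambda}$ is set-valued and the optimal permutation can change from one iteration to the next; furthermore, at a limit point $\vw^*$ with ties in $|\vw^*|$ the sign function and the selection of $\vP^*$ are not single-valued. Finiteness of $\cP$ circumvents this by allowing a constant-permutation subsequence, but one must still verify that the limiting permutation $\vP^*$ remains optimal at $\vw^*$ and that the closed-form formula is continuous at $\vw^*$ along the chosen branch; this is where the argument must be handled with care.
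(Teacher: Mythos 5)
Your proposal follows the same route as the paper: the identical sufficient-decrease inequality (proximal optimality of $\vu^{l+1}$ plus the descent lemma for $C^{1,1}$ functions giving $E(\vu^{l+1})\le E(\vu^l)-(\tfrac{1}{2\beta}-\tfrac{L_L}{2})\|\vu^{l+1}-\vu^l\|^2$), followed by identifying limit points as fixed points of $\prox_{\beta R_\lambda}(\cdot-\beta\nabla L(\cdot))$ and invoking Lemma~\ref{lemma5}. Where you differ is in the tail of the argument, and the difference is to your credit: the paper simply asserts that $E(\vu^l)$ converges to $E(\bar\vu)$ ``where $\bar\vu$ is a fixed point,'' whereas you make explicit (i) that boundedness of the iterates is needed to extract a convergent subsequence --- note that $R_\lambda$ need not be coercive when $\lambda_1=0$, so the paper's own proof tacitly assumes something here too --- and (ii) how to pass to the limit in the set-valued proximal inclusion by exploiting the finiteness of $\cP$ to fix a single permutation $\vP^*$ along a further subsequence. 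That second step does close cleanly: for fixed $\vP^*$ the map $\vw\mapsto\max(|\vw|-\beta\vP^*\lambda,\vzero)\odot\sign(\vw)$ is the componentwise soft-thresholding (or identity where $(\vP^*\lambda)_i=0$) and hence continuous, and optimality of $\vP^*$ at $\vw^*$ follows from continuity of $\vw\mapsto\sum_i(\vP^*\lambda)_i|w_i|$ together with continuity of $R_\lambda$ as a minimum of finitely many such functions, so the caveat you raise at the end is resolvable and not an actual gap. The only caution is that your coercivity hypothesis is an added assumption not present in the theorem statement; it would be cleaner to state it as such (or observe that it holds for the intended application $L(\vu)=\alpha\|\vA\vu-\vb\|^2$ only under extra conditions), rather than folding it silently into the proof.
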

\begin{proof} From the iterations, we have
\begin{align}
& E(\vu^{l+1})+\left({1\over2\beta}-{L_L\over 2}\right)\|\vu^{l+1}-\vu^l\|_2^2\nonumber\\
\leq  &R_\lambda(\vu^{l+1}) + L(\vu^{l})+(\vu^{l+1}-\vu^l)^T\nabla L(\vu^l)+{ L_L\over2}\|\vu^{l+1}-\vu^l\|_2^2\nonumber\\
&+\left({1\over2\beta}-{L_L\over 2}\right)\|\vu^{l+1}-\vu^l\|_2^2 \label{for:ist_dec_1} \\
= &R_\lambda(\vu^{l+1}) + L(\vu^{l})+(\vu^{l+1}-\vu^l)^T\nabla L(\vu^l)+{ 1\over2\beta}\|\vu^{l+1}-\vu^l\|_2^2\nonumber\\
=& R_\lambda(\vu^{l+1}) + L(\vu^{l})+{ 1\over2\beta}\|\vu^{l+1}-\vu^l+{\beta}\nabla L(\vu^l)\|_2^2-{\beta\over2}\|\nabla L(\vu^l)\|_2^2\nonumber\\
\leq &R_\lambda(\vu^{l}) + L(\vu^{l})+{ 1\over2\beta}\|\vu^{l}-\vu^l+{\beta}\nabla L(\vu^l)\|_2^2-{\beta\over2}\|\nabla L(\vu^l)\|_2^2\label{for:ist_dec_2}\\
= &R_\lambda(\vu^{l}) + L(\vu^{l})=E(\vu^l).\nonumber
\end{align}
Here, \eqref{for:ist_dec_1} comes from the assumption that $L\in C^{1,1}$, and~\eqref{for:ist_dec_2} holds because $\vu^{l+1}$ is a minimizer of $\beta R_\lambda(\vu)+{1\over2}\|\vu-(\vu^l-\beta \nabla L(\vu^l))\|_2^2$. Therefore $E(\vu^l)$ is decreasing when $\beta< 1/L_L$. In addition, $E(\vu^l)$ is bounded below. Then $E(\vu)$ converges to $E(\bar\vu)$, where $\bar\vu$ is a fixed point, i.e., $\bar\vu\in\prox_{\beta R_\lambda}(\bar\vu-\beta \nabla L(\bar\vu))$. Lemma~\ref{lemma5} states that $\bar\vu$ is a local minimizer of $E(\vu)$. Furthermore, all subsequence limit points of $\vu^l$ are fixed points.
\qed\end{proof}

%%% Numerical Experiment %%%
\section{Numerical Experiment}\label{sec:numerical}

We, in this section, illustrate the performance of the proposed nonconvex sorted $\ell_1$ minimization. All the experiments are done in Matlab R2013a with Core i7-3.40 GHz and 16.0G RAM. As discussed previously, if we only give two different weights, i.e., $\lambda_1 = \cdots = \lambda_K = \omega_1$ and $\lambda_{K+1} = \cdots = \lambda_n = 1$, the nonconvex sorted $\ell_1$ minimization becomes the two-level $\ell_1$ minimization (2level)~\cite{huang2014two}. Furthermore, if one sets $\omega_1 = 0$, it reduces to the iterative support detection (ISD)~\cite{wang2010sparse} with fixed number of supports. The most general case is the multiple-level $\ell_1$ minimization (mlevel) and we in this paper use the following strategy:
\[
\lambda_i =
\left \{
\begin{array}{ll}
1, & i > K,\\
e^{- r(K -i)/K}, & i \leq K,
\end{array}
\right.
\]
where $r$ controls the rate of decreasing $\lambda_i$ from $1$ to $0$. The aim of this experimental section is to test different weight setting methods. The comparative method is the iterative reweighted $\ell_1$ minimization (IRL1) in~\cite{candes2008enhancing}, which is also nonconvex but the weights are set according to value not the sort.

The tuning parameters in nonconvex sorted $\ell_1$ minimization, such as $K$, $w_1$, and $r$, are related to the sparsity and nonconvexity. Generally, with the decreasing of $w_1$ and the increasing of $K$ (as long as $K < n/2$), the nonconvexity of $R_\lambda$ increases. In practice, it is better to start from the $\ell_1$ minimization, of which the result is denoted by $\vu^0$, and increase the nonconvexity during the iterations. For 2level and mlevel, we fix $K$ to be $\lfloor \|\vu^0\|_0/3 \rfloor$, i.e., the largest integer smaller than one third of $\#\{i: u_i^0 \neq 0\}$, but change $\omega_1$ for 2level, and $r$ for mlevel as follows:
\begin{eqnarray*}
&& \omega^0_1 = 0.5 ~~\mathrm{and}~~ \omega^l_1 = \max\{0.1, 0.9\omega^{l-1}_1\}, \forall l \geq 1;\\
&& r^l = \min\{ 10, 0.15l\}, \forall l \geq 1.
\end{eqnarray*}
For ISD, $w_1 = 0$ is fixed and we enhance the sparsity by increasing $K$ following the strategy used in \cite{wang2010sparse} \cite{yang2011alternating}. For IRL1, the weight is set as
\[
\lambda^l_i = \frac{1}{ |x_i| + \max \{{0.5}^{l-1}, 0.5^{8}\}},
\]
which is also related to the iteration count.

We first illustrate the phase transition diagram via nonconvex sorted $\ell_1$ minimization. In this experiment, 100-dimensional $s$-sparse signals $\vu$, i.e., $\vu$ has only $s$ nonzero components, and matrix $\vA$ are randomly generated. The non-zero components of $\vu$ and the elements of $\vA$ come from the standard Gaussian distribution. Then we let $\vb = \vA \vu$ and use Algorithm 1 to recover $\vu$ from $\vb$ and $\vA$. Algorithm 1 involves a series of weighted $\ell_1$ minimization, for which we apply YALL1 \cite{yang2011alternating}. ISD and IRL1 also can be solved by YALL1. Those reweighted methods iteratively minimize the weighted $\ell_1$ penalty and the maximum iteration is controlled by $l_{\mathrm{max}}$. IRL1 takes more time than other methods on the weighted L1 minimization problems because the dynamic range of the weights can be very high for IRL1 when the nonzero components are Gaussian distributed. Hence we set $l_{\mathrm{max}} = 3$ for IRL1 and $l_{\mathrm{max}} = 10$ for others. One will observe that even with this setting, the computational time of IRL1 is more than other methods.

After recovering the signal, we calculate the $\ell_\infty$ distance between $\vu$ and the recovered signal. If the distance is smaller than $10^{-3}$, we claim the signal is successfully recovered. In the phase transition diagram, we test $100$ trials for each $s$ and $m$
and then show the results in Fig.\ref{fig-phase} for ISD, 2level, mlevel, and IRL1. For a clear comparison, we in Fig.\ref{fig-phase:d} plot $50 \%$
successful recovery lines for all these four methods.

\begin{figure}[hbtp]
  \centering
  \subfigure[]{
    \label{fig-phase:a}
    \psfrag{k}[c]{\footnotesize sparsity $s$}
    \psfrag{m}[c][BI][1][0]{\footnotesize number of measurements $m$}
    \includegraphics[width=2.2in]{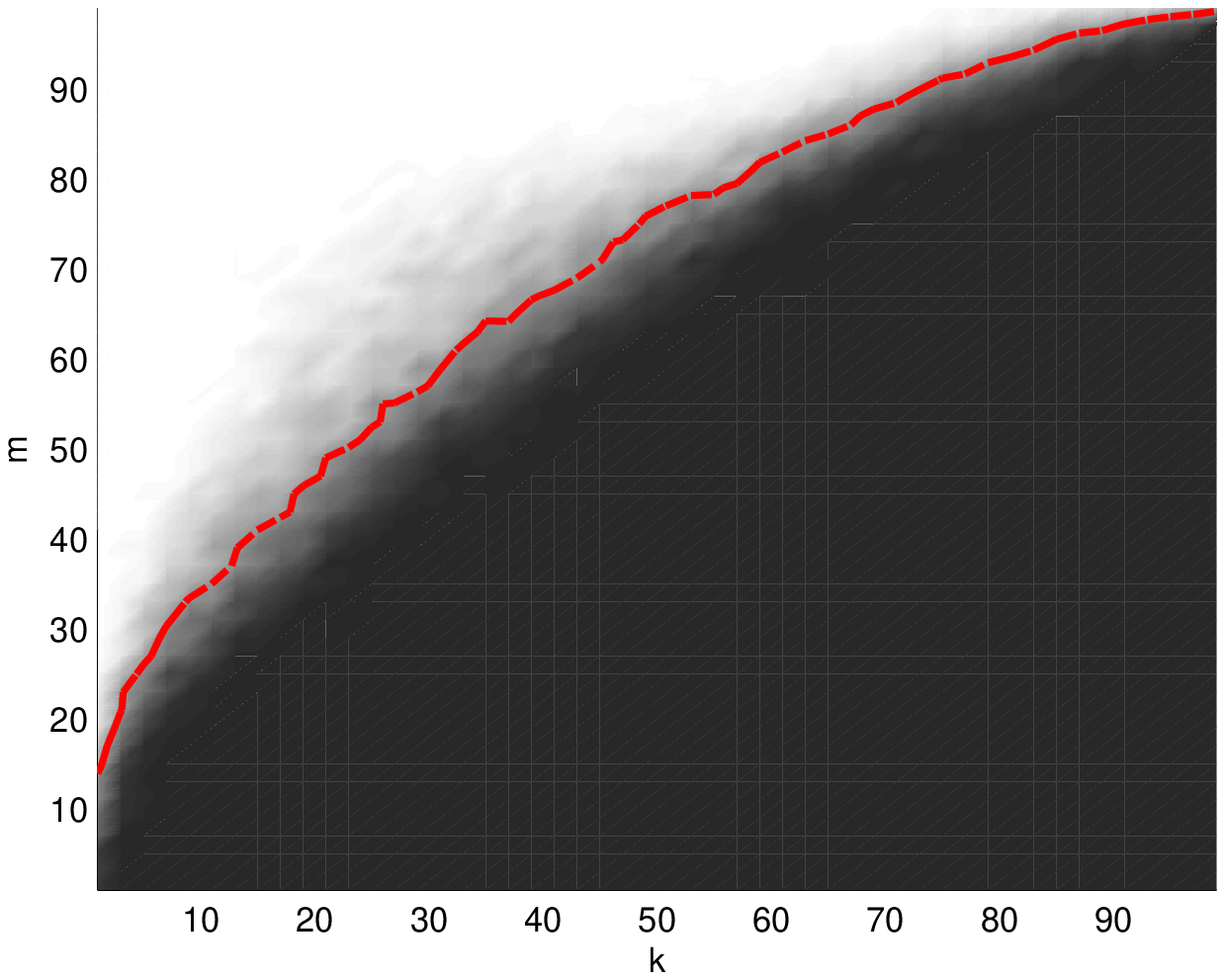}} \quad
  \subfigure[]{
    \label{fig-phase:b}
    \psfrag{k}[c]{\footnotesize sparsity $s$}
    \psfrag{m}[c][BI][1][0]{\footnotesize number of measurements $m$}
    \includegraphics[width=2.2in]{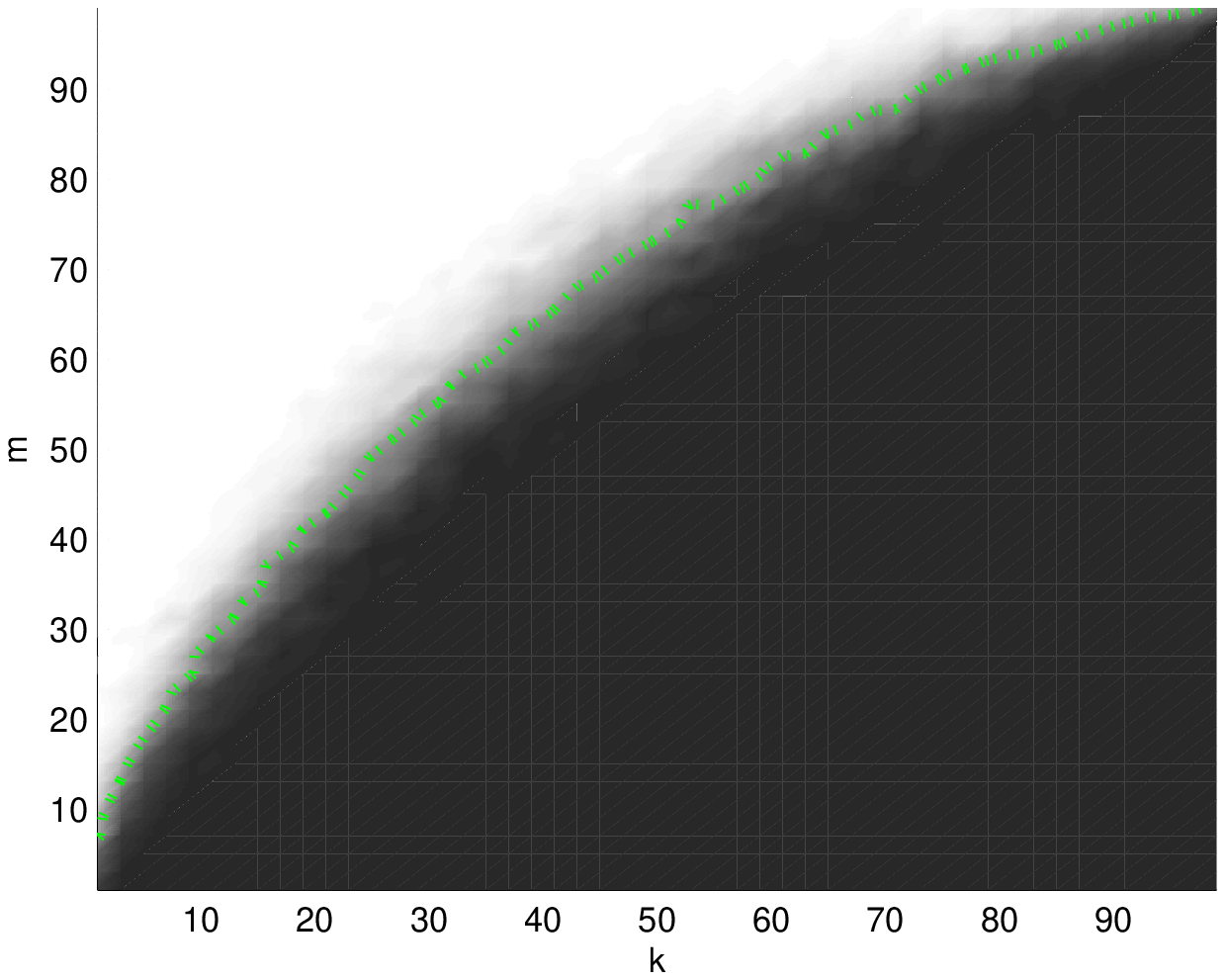}} \\
  \subfigure[]{
    \label{fig-phase:c}
    \psfrag{k}[c]{\footnotesize sparsity $s$}
    \psfrag{m}[c][BI][1][0]{\footnotesize number of measurements $m$}
    \includegraphics[width=2.2in]{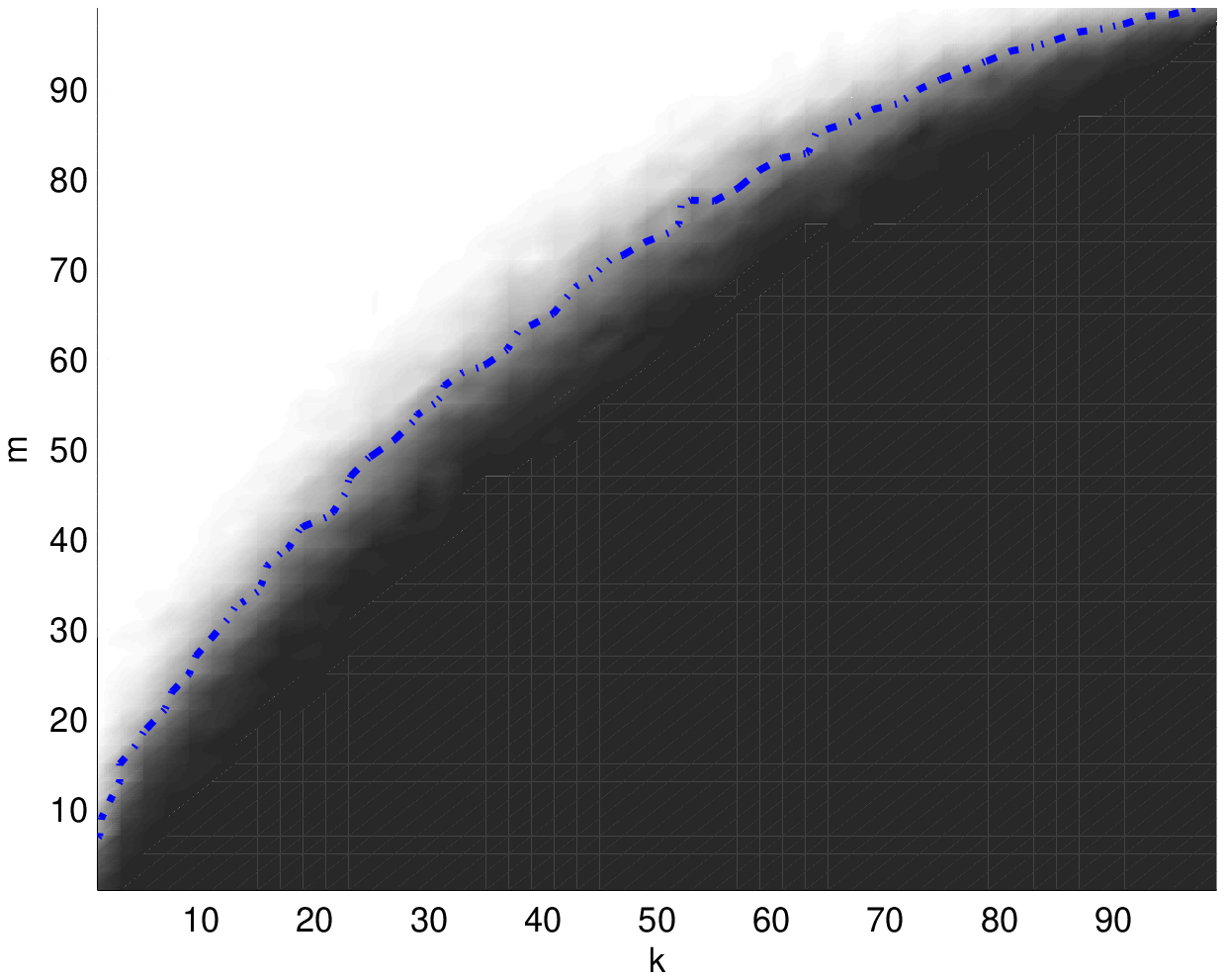}} \quad
  \subfigure[]{
    \label{fig-phase:d}
    \psfrag{k}[c]{\footnotesize sparsity $s$}
    \psfrag{m}[c][BI][1][0]{\footnotesize number of measurements $m$}
    \includegraphics[width=2.2in]{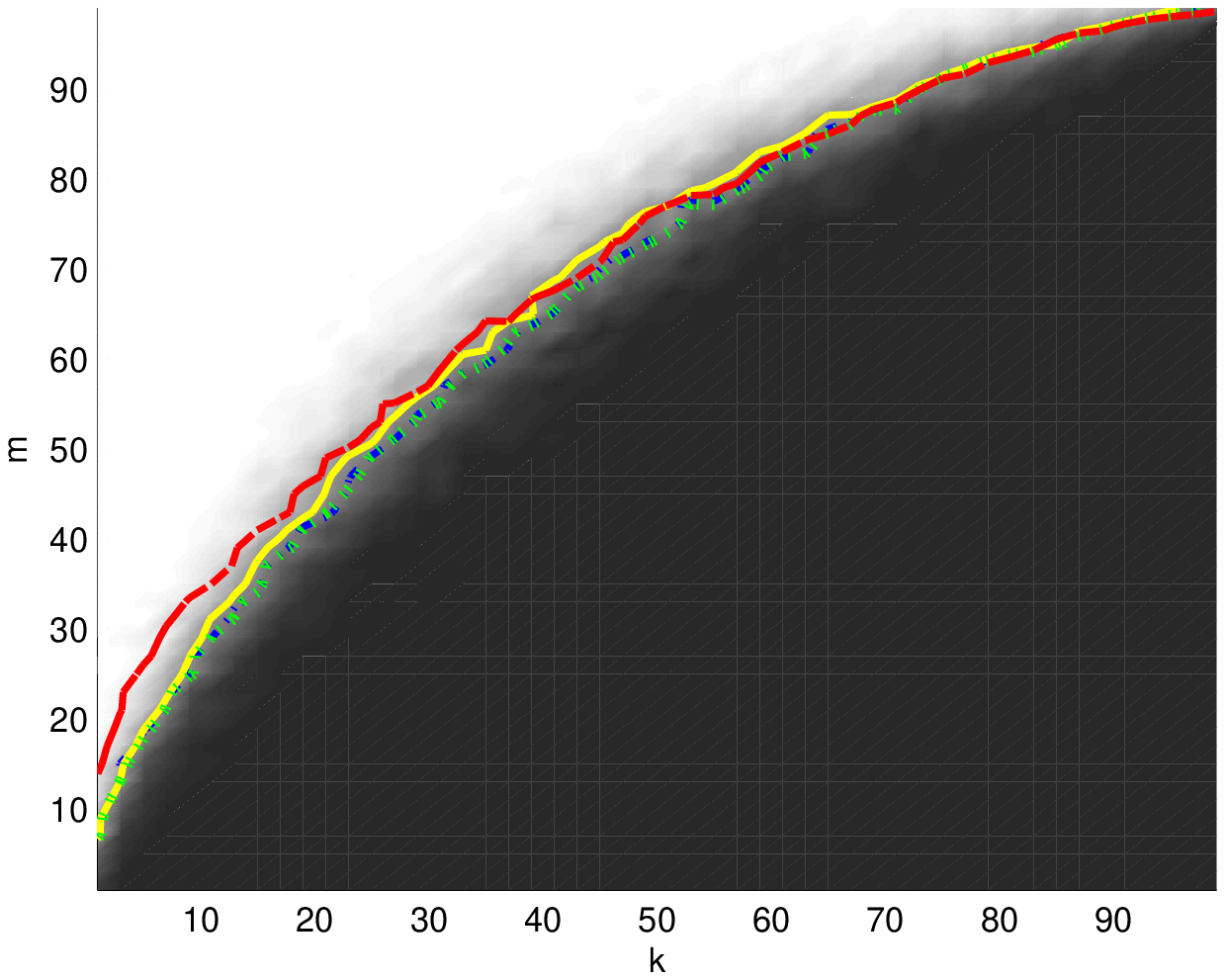}}
    \caption{Phase transition diagram for the considered algorithms: (a) ISD; (b) 2level; (c) mlevel; (d) IRL1. The grey level stands for the successful recovery percentage: white means $100\%$ recovery and black means $0\%$ recovery, the $50\%$ recovery is also displayed by the green solid line. The red, blue, green and yellow lines show the $50\%$ successful recovery by ISD, 2level, mlevel, and IRL1, respectively. In (d), the lines are shown together for comparison.}\label{fig-phase}
\end{figure}

We also repeat the experiment used in \cite{candes2008enhancing}, where a recovery problem with $m = 100$ and $n = 256$ is considered. $\vu$ is a $s$-sparse signal and the recovery performance of different $s$ values is evaluated. In Fig.\ref{fig-recovery:a} and Fig.\ref{fig-recovery:b}, the recovery percentage of different methods and the computational time are shown, respectively. From both Fig.\ref{fig-phase} and Fig.\ref{fig-recovery}, one can find that compared with setting weights by value, setting weights according to the sort can enhance the sparse recovery performance.

\begin{figure}[hbtp]
  \centering
  \subfigure[]{
    \label{fig-recovery:a} %% label for first subfigure
    \psfrag{k}[c]{\footnotesize sparsity $s$}
    \psfrag{p}[c][BI][1][0]{\footnotesize percentage}
    \includegraphics[width=2.2in]{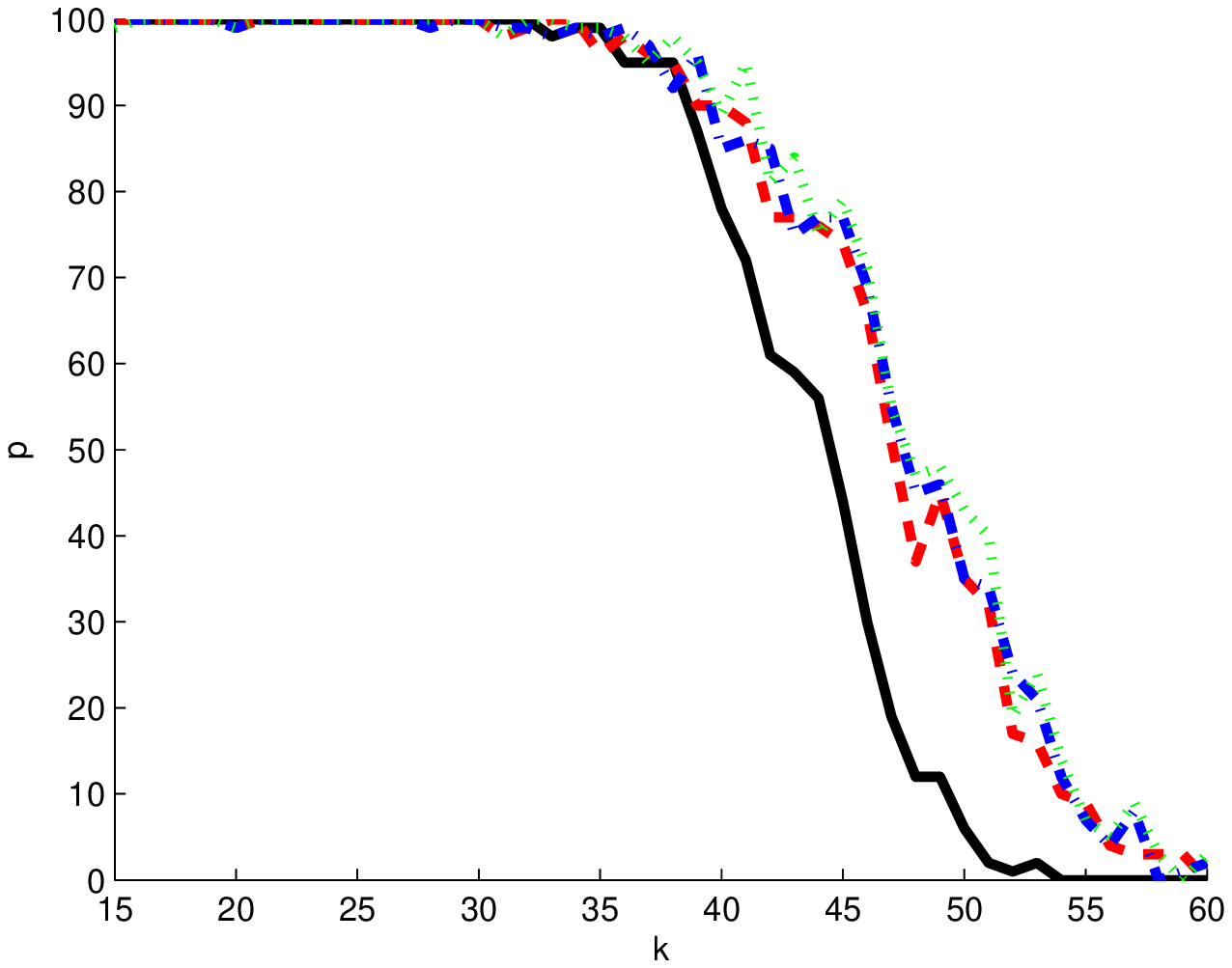}} \quad
  \subfigure[]{
    \label{fig-recovery:b} %% label for first subfigure
    \psfrag{k}[c]{\footnotesize sparsity $s$}
    \psfrag{t}[c][BI][1][0]{\footnotesize time (s)}
    \includegraphics[width=2.2in]{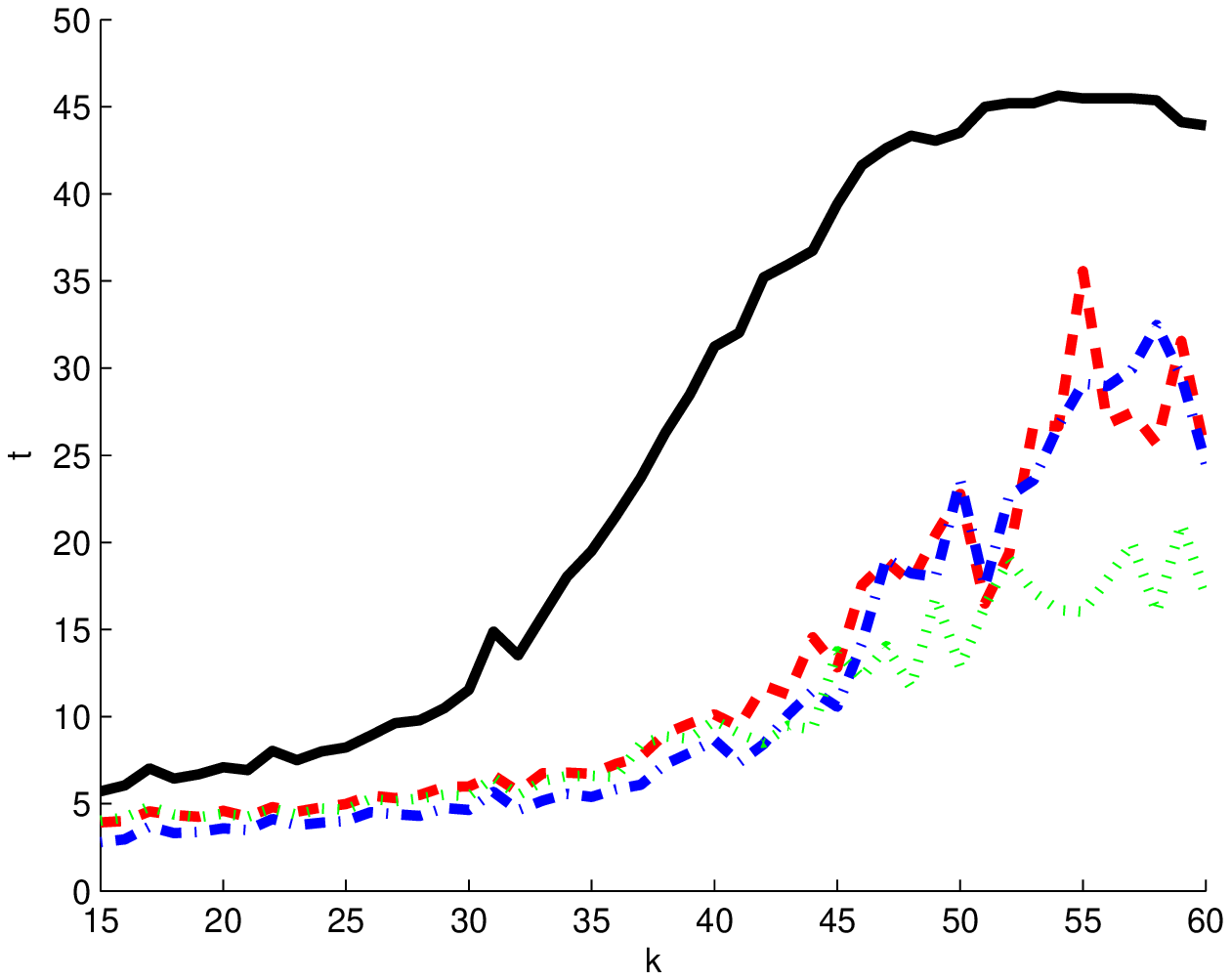}}
    \caption{Performance on signal recovery of ISD (red dashed line), 2level (blue dot-dashed line), mlevel (green dotted line), and IRL1 (black solid line): (a) recovery percentage; (b) average computational time.}\label{fig-recovery}
\end{figure}

Besides the above noise-free experiments, the algorithms are also tested on real-life electrocardiography (ECG) data. The ECG data come from the National Metrology Institute of Germany, which is online available in the PhysioNet \cite{goldberger2000physiobank} \cite{moody2001physionet}. This data set has $15$ signal channels and each channel contains 38400 data points. Notice that ECG signal is not sparse in the time domain and is sparse on the orthogonal Daubechies wavelets (db 10), of which the matrix is denoted by $\mathbf{\Psi}$. Then we start from the first $1024$ data, denoted by $\vu$ and randomly generate one Gaussian matrix $\vA \in \vR^{m \times n}$, where $n = 1024$ and $m$ varies from $64$ to $1024$. Since $\mathbf{\Psi} \vu$ is sparse, we can apply the considered algorithms to recover the signal from $\vb = \vA \mathbf{\Psi} \vu$. We calculate the mean of squared error between the recovered and the original signals, then we move to the next $1024$ data.

In this experiment, ISD, 2level, mlevel, and IRL1 are evaluated. For 2level and mlevel, on the one hand, we can use Algorithm 1 and apply YALL1 to solve the involved weighted $\ell_1$ minimization problems. On the other hand, we can also use the iterative sorted thresholding, i.e., Algorithm 2, to solve the unconstrained problems and evaluate their performance. Not like Algorithm 1,  Algorithm 2 does not calculate $\vu^0$ and hence we set $K$ heuristically as $\lfloor n/5 \rfloor$.

For $m = 128, 256, 512$ and different algorithms, the mean squared error (MSE)  and the corresponding mean computational time are reported in Tables \ref{table-ECG} and \ref{table-ECG-3}, where the best ones in the view of MSE are underlined.
From the results, one can see that solving nonconvex sorted $\ell_1$ minimization by Algorithm 1 provides accurate signals. When the signal length $m$ increases, the iterative sorted thresholding becomes attractive due to its computational effectiveness, because only thresholding and matrix multiplication operations are involved. Compared with weighting by value, weighting by sort shows better performance on this experiment. Especially, when the compression ratio is high, the advantage of mlevel and 2level is significant. In that case, it is worthy designing a flexible and suitable weighting strategy. While, for the low compression ratio situation, we suggest ISD or 2level/mlevel solved by Algorithm 2.

\begin{table}[hbtp]
\begin{center}
\scriptsize
\caption{Mean Squared Error and Computational Time on ECG Data Sets}\label{table-ECG}
\vskip 0.1cm
\begin{tabular}{rrllll}
\toprule
 &     &        &       & ~~~two-level    & ~~multi-level   \\
No. & $m$ &  IRL1  &  ~ISD    & Alg.1 \; Alg.2& Alg.1 \; Alg.2 \\
\midrule
1 %& 64  & 8.107  & 8.914 & 8.374  \quad 8.184 & 8.682 \quad \underline{8.076}\\
  %&     & 0.24 s & 0.83 s& 1.50 s \quad 1.22 s& 2.30 s\quad 1.35 s\\
  & 128 & 4.805  & 3.429 & 3.493  \quad 4.147 & \underline{3.290} \quad 4.006\\
  &     & 0.23 s & 0.73 s& 1.22 s \quad 1.64 s& 1.98 s\quad 1.80 s\\
  & 256 & 1.791  & 1.295 & 1.302  \quad 1.393 & \underline{1.283} \quad 1.394\\
  &     & 0.35 s & 0.73 s& 1.15 s \quad 1.23 s& 2.38 s\quad 1.28 s\\
  & 512 & 0.935  & 0.836 & \underline{0.830}  \quad 0.857 & \underline{0.830} \quad 0.853\\
  &     & 1.09 s & 2.09 s& 3.33 s \quad 1.31 s& 3.45 s\quad 1.33 s\\
\midrule
2 %& 64  & 11.58  & 13.94 & 12.88  \quad 11.59 & 13.68 \quad \underline{11.56}\\
  %&     & 0.30 s & 0.88 s& 1.75 s \quad 1.21 s& 2.78 s\quad 1.29 s\\
  & 128 & 6.205  & 3.189 & 3.072  \quad 6.207 & \underline{2.673} \quad 6.033\\
  &     & 0.24 s & 0.55 s& 0.98 s \quad 1.76 s& 1.90 s\quad 1.85 s\\
  & 256 & 1.407  & 0.925 & 0.922  \quad 1.131 & \underline{0.903} \quad 1.130\\
  &     & 0.31 s & 0.52 s& 0.93 s \quad 1.37 s& 1.89 s\quad 1.41 s\\
  & 512 & 0.673  & 0.596 & 0.594  \quad 0.610 & \underline{0.587} \quad 0.609\\
  &     & 1.09 s & 1.18 s& 2.11 s \quad 1.63 s& 2.11 s\quad 1.63 s\\
\midrule
3 %& 64  & \underline{11.21}  & 12.39 & 11.35  \quad 11.31 & 11.92 \quad 11.62\\
  %&     & 0.37 s & 0.73 s& 1.37 s \quad 1.90 s& 2.12 s\quad 1.05 s\\
  & 128 & 6.188  & 3.897 & 3.692  \quad 6.482 & \underline{3.191} \quad 6.281\\
  &     & 0.35 s & 0.57 s& 1.08 s \quad 2.11 s& 1.95 s\quad 1.29 s\\
  & 256 & 1.732  & 1.085 & 1.128  \quad 1.412 & \underline{1.083} \quad 1.418\\
  &     & 0.50 s & 0.71 s& 1.20 s \quad 1.92 s& 2.37 s\quad 1.39 s\\
  & 512 & 0.756  & 0.678 & 0.682  \quad 0.688 & \underline{0.676} \quad 0.686\\
  &     & 1.24 s & 1.38 s& 2.37 s \quad 1.58 s& 3.01 s\quad 1.42 s\\
\midrule
4 %& 64  & \underline{8.032}  & 9.285 & 8.859  \quad 8.194 & 9.347 \quad 8.218\\
  %&     & 0.33 s & 0.74 s& 1.38 s \quad 1.08 s& 2.15 s\quad 1.28 s\\
  & 128 & 5.598  & 4.939 & \underline{4.689}  \quad 5.335 & 4.828 \quad 5.303\\
  &     & 0.43 s & 0.60 s& 1.11 s \quad 1.01 s& 1.69 s\quad 1.24 s\\
  & 256 & 1.367  & 0.952 & 0.972  \quad 1.096 & \underline{0.949} \quad 1.103\\
  &     & 0.39 s & 0.64 s& 0.84 s \quad 0.96 s& 1.76 s\quad 0.90 s\\
  & 512 & 0.707  & 0.631 & 0.629  \quad 0.646 & \underline{0.624} \quad 0.643\\
  &     & 1.20 s & 1.95 s& 3.11 s \quad 1.19 s& 4.14 s\quad 1.20 s\\
\midrule
5 %& 64  & \underline{7.981}  & 8.932 & 8.250  \quad 8.109 & 8.691 \quad 8.033\\
  %&     & 0.24 s & 0.85 s& 1.51 s \quad 1.16 s& 2.56 s\quad 1.22 s\\
  & 128 & 4.598  & 3.099 & 3.027  \quad 3.288 & \underline{2.821} \quad 3.151\\
  &     & 0.28 s & 0.65 s& 1.21 s \quad 1.16 s& 2.08 s\quad 1.54 s\\
  & 256 & 1.628  & 1.120 & 1.133  \quad 1.311 & \underline{1.111} \quad 1.306\\
  &     & 0.32 s & 0.77 s& 1.33 s \quad 1.20 s& 2.39 s\quad 1.20 s\\
  & 512 & 0.787  & 0.704 & \underline{0.700}  \quad 0.720 & \underline{0.700} \quad 0.718\\
  &     & 1.05 s & 1.36 s& 2.39 s \quad 1.21 s& 4.04 s\quad 1.23 s\\
\midrule
6 %& 64  & \underline{10.55}  & 11.72 & 10.97  \quad 10.60 & 11.92 \quad 10.45\\
  %&     & 0.25 s & 1.00 s& 1.01 s \quad 0.97 s& 1.60 s\quad 1.07 s\\
  & 128 & 5.780  & 3.229 & 2.985  \quad 4.626 & \underline{2.770} \quad 3.757\\
  &     & 0.25 s & 0.59 s& 1.05 s \quad 1.27 s& 1.38 s\quad 1.27 s\\
  & 256 & 1.243  & 0.791 & 0.821  \quad 0.988 & \underline{0.787} \quad 0.995\\
  &     & 0.32 s & 0.51 s& 0.86 s \quad 0.89 s& 2.00 s\quad 0.95 s\\
  & 512 & 0.559  & 0.504 & 0.506  \quad 0.516 & \underline{0.500} \quad 0.515\\
  &     & 0.87 s & 1.12 s& 1.92 s \quad 1.00 s& 3.15 s\quad 1.02 s\\
\midrule
7 %& 64  & 9.463  & 9.805 & \underline{9.085}  \quad 10.62 & 9.559 \quad 10.46\\
  %&     & 0.21 s & 0.71 s& 1.22 s \quad 1.21 s& 2.04 s\quad 1.25 s\\
  & 128 & 5.376  & 3.666 & 3.715  \quad 4.889 & \underline{3.390} \quad 4.712\\
  &     & 0.18 s & 0.60 s& 0.91 s \quad 1.22 s& 1.55 s\quad 1.20 s\\
  & 256 & 1.181  & \underline{0.685} & 0.730  \quad 1.047 & 0.707 \quad 1.054\\
  &     & 0.20 s & 0.42 s& 0.68 s \quad 1.00 s& 1.32 s\quad 1.09 s\\
  & 512 & 0.410  & 0.395 & 0.401  \quad \underline{0.391} & 0.396 \quad 0.392\\
  &     & 0.42 s & 0.49 s& 1.84 s \quad 0.95 s& 1.39 s\quad 0.95 s\\
\bottomrule
\end{tabular}
\end{center}
\end{table}

\begin{table}[hbtp]
\begin{center}
\scriptsize
\caption{Mean Squared Error and Computational Time on ECG Data Sets (cont.)}\label{table-ECG-3}
\vskip 0.1cm
\begin{tabular}{rrllll}
\toprule
 &     &        &       & ~~~two-level    & ~~multi-level   \\
No. & $m$ &  IRL1  &  ~ISD    & Alg.1 \; Alg.2& Alg.1 \; Alg.2 \\
\midrule
8 %& 64  & \underline{12.26}  & 14.09 & 12.73  \quad 12.53 & 13.45 \quad 12.44\\
  %&     & 0.23 s & 0.87 s& 1.69 s \quad 1.30 s& 1.73 s\quad 1.43 s\\
  & 128 & 7.397  & 3.993 & 4.302  \quad 5.903 & \underline{3.652} \quad 5.612\\
  &     & 0.28 s & 0.65 s& 1.23 s \quad 1.70 s& 1.97 s\quad 1.04 s\\
  & 256 & 1.219  & \underline{0.596} & 0.652  \quad 0.810 & 0.613 \quad 0.890\\
  &     & 0.37 s & 0.84 s& 1.29 s \quad 1.79 s& 2.32 s\quad 1.38 s\\
  & 512 & 0.414  & 0.379 & 0.388  \quad 0.386 & \underline{0.370} \quad 0.386\\
  &     & 0.75 s & 0.72 s& 2.25 s \quad 1.51 s& 2.12 s\quad 1.08 s\\
\midrule
9 %& 64  & \underline{15.46}  & 17.18 & 15.66  \quad 15.91 & 16.67 \quad 15.82\\
  %&     & 0.23 s & 0.95 s& 1.55 s \quad 1.46 s& 2.49 s\quad 1.52 s\\
  & 128 & 8.459  & 4.267 & 4.509  \quad 6.982 & \underline{3.859} \quad 8.727\\
  &     & 0.21 s & 0.49 s& 0.96 s \quad 1.60 s& 1.67 s\quad 1.67 s\\
  & 256 & 1.329  & \underline{0.685} & 0.726  \quad 0.832 & 0.698 \quad 0.853\\
  &     & 0.31 s & 0.74 s& 1.20 s \quad 1.89 s& 2.03 s\quad 1.91 s\\
  & 512 & 0.429  & 0.416 & 0.417  \quad 0.407 & \underline{0.400} \quad 0.408\\
  &     & 0.49 s & 0.63 s& 0.79 s \quad 1.52 s& 1.31 s\quad 1.55 s\\
\midrule
10%& 64  & 8.576  & 8.020 & \underline{7.570}  \quad 9.067 & 8.187 \quad 8.712\\
  %&     & 0.26 s & 0.76 s& 1.26 s \quad 1.24 s& 2.36 s\quad 1.23 s\\
  & 128 & 4.685  & 3.390 & 3.287  \quad 5.151 & \underline{3.212} \quad 3.888\\
  &     & 0.25 s & 0.86 s& 1.53 s \quad 1.57 s& 2.38 s\quad 1.56 s\\
  & 256 & 1.154  & 0.635 & 0.654  \quad 0.665 & \underline{0.633} \quad 0.657\\
  &     & 0.37 s & 0.72 s& 1.28 s \quad 1.81 s& 2.36 s\quad 1.57 s\\
  & 512 & 0.427  & 0.386 & 0.392  \quad 0.399 & \underline{0.381} \quad 0.398\\
  &     & 0.60 s & 0.72 s& 1.23 s \quad 1.14 s& 2.30 s\quad 1.14 s\\
\midrule
11%& 64  & \underline{5.991}  & 6.552 & 6.095  \quad 6.178 & 6.471 \quad 6.094\\
  %&     & 0.22 s & 0.69 s& 1.25 s \quad 0.96 s& 2.05 s\quad 1.02 s\\
  & 128 & 3.522  & 2.448 & 2.064  \quad 3.455 & \underline{1.937} \quad 3.281\\
  &     & 0.20 s & 0.50 s& 0.95 s \quad 1.06 s& 1.52 s\quad 1.12 s\\
  & 256 & 0.930  & \underline{0.588} & 0.615  \quad 0.728 & 0.583 \quad 0.729\\
  &     & 0.28 s & 0.56 s& 1.28 s \quad 0.77 s& 1.87 s\quad 0.79 s\\
  & 512 & 0.402  & 0.353 & 0.359  \quad 0.370 & \underline{0.351} \quad 0.358\\
  &     & 0.75 s & 1.12 s& 2.02 s \quad 0.70 s& 3.94 s\quad 0.69 s\\
\midrule
12%& 64  & \underline{5.356}  & 6.274 & 5.960  \quad 5.473 & 6.270 \quad 5.242\\
  %&     & 0.22 s & 0.56 s& 1.08 s \quad 0.85 s& 1.68 s\quad 0.98 s\\
  & 128 & 3.159  & \underline{1.776} & 1.921  \quad 2.471 & 1.820 \quad 2.386\\
  &     & 0.22 s & 0.53 s& 0.91 s \quad 0.94 s& 1.48 s\quad 0.95 s\\
  & 256 & 0.775  & 0.502 & 0.503  \quad 0.576 & \underline{0.491} \quad 0.577\\
  &     & 0.28 s & 0.44 s& 0.79 s \quad 0.56 s& 1.65 s\quad 0.57 s\\
  & 512 & 0.348  & 0.308 & 0.310  \quad 0.322 & \underline{0.306} \quad 0.321\\
  &     & 0.78 s & 1.38 s& 2.17 s \quad 0.58 s& 3.23 s\quad 0.59 s\\
\midrule
13%& 64  & 4.207  & 4.016 & 3.868  \quad 4.333 & \underline{3.901} \quad 4.181\\
  %&     & 0.26 s & 0.60 s& 1.03 s \quad 0.87 s& 1.83 s\quad 0.95 s\\
  & 128 & 2.316  & 1.614 & 1.448  \quad 2.281 & \underline{1.417} \quad 2.222\\
  &     & 0.25 s & 0.58 s& 0.95 s \quad 0.84 s& 1.64 s\quad 0.90 s\\
  & 256 & 0.514  & \underline{0.304} & 0.320  \quad 0.424 & 0.310 \quad 0.427\\
  &     & 0.37 s & 0.67 s& 1.25 s \quad 0.72 s& 1.94 s\quad 0.70 s\\
  & 512 & 0.159  & 0.158 & 0.160  \quad 0.168 & \underline{0.156} \quad 0.187\\
  &     & 0.60 s & 0.55 s& 0.95 s \quad 0.54 s& 1.62 s\quad 0.55 s\\
\midrule
14%& 64  & \underline{7.343}  & 8.305 & 8.070  \quad 7.599 & 8.486 \quad 7.719\\
  %&     & 0.20 s & 0.54 s& 1.06 s \quad 0.94 s& 1.79 s\quad 1.08 s\\
  & 128 & 4.169  & 2.558 & 2.344  \quad 2.621 & \underline{2.223} \quad 3.417\\
  &     & 0.21 s & 0.44 s& 0.81 s \quad 1.20 s& 1.35 s\quad 1.25 s\\
  & 256 & 0.958  & 0.568 & 0.588  \quad 0.647 & \underline{0.566} \quad 0.750\\
  &     & 0.27 s & 0.45 s& 0.76 s \quad 0.73 s& 1.51 s\quad 0.75 s\\
  & 512 & 0.410  & 0.355 & 0.359  \quad 0.377 & \underline{0.353} \quad 0.377\\
  &     & 0.70 s & 0.87 s& 1.59 s \quad 0.70 s& 3.26 s\quad 0.71 s\\
\midrule
15%& 64  & \underline{5.006}  & 5.293 & 5.088  \quad 5.114 & 5.254 \quad 5.057\\
  %&     & 0.22 s & 0.82 s& 1.57 s \quad 0.95 s& 2.56 s\quad 1.07 s\\
  & 128 & 2.836  & \underline{1.754} & 1.899  \quad 2.488 & 1.791 \quad 2.291\\
  &     & 0.20 s & 0.40 s& 0.73 s \quad 1.01 s& 1.33 s\quad 1.07 s\\
  & 256 & 0.630  & 0.342 & 0.359  \quad 0.413 & \underline{0.337} \quad 0.411\\
  &     & 0.27 s & 0.52 s& 0.83 s \quad 0.61 s& 1.54 s\quad 0.61 s\\
  & 512 & 0.175  & \underline{0.174} & 0.178  \quad 0.201 & \underline{0.174} \quad 0.178\\
  &     & 0.48 s & 0.43 s& 0.79 s \quad 0.52 s& 1.32 s\quad 0.52 s\\
\bottomrule
\end{tabular}
\end{center}
\end{table}

\section{Conclusion}

The nonconvex sorted $\ell_1$ minimization is proposed to enhance the sparse signal recovery. In this penalty, the set of the weights is fixed and the weights are assigned based on the ranks of the corresponding components among all the components in magnitude. Iteratively reweighted $\ell_1$ minimization and iterative sorted thresholding are proposed to solve nonconvex sorted $\ell_1$ minimization problems. Both methods are generalizations of existing methods. Iteratively reweighted $\ell_1$ minimization is a generalization of iterative support detection, and iterative sorted thresholding is a genelarization of iterative hard thresholding. Both methods are shown to converge to a local optimum. The numerical experiments demonstrate the better performance of assigning weighted by sort.

%\begin{acknowledgements}
%If you'd like to thank anyone, place your comments here and remove the percent signs.
%\end{acknowledgements}

% BibTeX users please use one of
%\bibliographystyle{spbasic}      % basic style, author-year citations
\bibliographystyle{spmpsci}      % mathematics and physical sciences
\bibliography{mcref,nonconvex}   % name your BibTeX data base
\end{document}